\documentclass[12pt]{amsart}
\usepackage{amsmath, amsthm, amssymb, hyperref, color}
\usepackage{setspace}
\usepackage{graphicx}
\usepackage{caption}
\usepackage{subcaption}
\usepackage{mathtools}
\usepackage{enumerate}
\usepackage{stackengine}
\usepackage{ dsfont }
\usepackage[all]{xypic}
\usepackage{verbatim}
\usepackage{upgreek}
\usepackage{chemfig, chemnum}
\usepackage{tikz-cd}
\usepackage[lined,commentsnumbered,ruled]{algorithm2e}
\theoremstyle{theorem}
\usepackage{latexsym,amsmath,amssymb,epsfig,graphicx}
\usepackage{caption}
\usepackage{cellspace}
\usepackage[utf8]{inputenc}
\usepackage{wasysym}
\usepackage{wrapfig}
\usepackage{float}
\newtheorem{thm}{Theorem}
\newtheorem{defn}{Definition}

\newtheorem{prop}{Proposition}

\newtheorem{lem}{Lemma}

\newcommand{\FF}{\mathbb{F}}
\newcommand{\PP}{\mathbb{P}}

\newcommand{\CC}{\mathbb{C} }
\newcommand{\ZZ}{\mathbb{Z}}

\newcommand{\A}{\mathbb{A}}

\begin{document}
\title{Formal Abel relations for curves in characteristic $p$}
\author{John B. Little}
\address{Professor {\it emeritus} of Mathematics\\
College of the Holy Cross, Worcester, MA 01610}
\email{\tt jlittle@holycross.edu}

\date{\today}

\begin{abstract}
In this working paper, we report recent work studying the form of Abel relations in the (generalized) Jacobians of reduced plane curves over an algebraically closed
field of characteristic $p$ from a formal power series point of view.  The ultimate goal (to be addressed in a subsequent paper) is to complete the work done in the author's PhD thesis from 
1980 and to establish a general characteristic $p$ form of the \emph{converse of 
Abel's theorem} discussed by Griffiths and used in the Lie-Wirtinger theorem on double translation manifolds and the  geometry of webs.  
\end{abstract}

\maketitle

\section{Introduction}

The classical version of Abel's theorem (see \cite{LitAbel,Grif}) implies
the following statement.  Let $C$ be an algebraic curve of degree $d$ in $\PP^2$ over $\CC$, possibly singular or reducible  but without multiple components.   
Let $f(x,y) = 0$ be an affine equation of $C$ and let $\Delta$ be a line meeting $C$ in $d$ distinct points.  By choosing coordinates in $\PP^2$, we may take $\Delta$
as the $y$-axis.  If $u,v$ are sufficiently small in absolute value, then the line $\Delta_{u,v}:  x = uy + v$ will again meet $C$ in $d$ distinct points
$P_i(u,v)$. 

\begin{thm}
\label{AbClassical}
 If $\omega$ is any $1$-form 
\begin{equation}
\label{AbDiff} 
\omega = \frac{p(x,y) dx}{\partial f/ \partial y},\ \deg(p(x,y)) \le d - 3
\end{equation}
on $C$, then integrating along paths lying in the smooth locus of $C$ the abelian sum 
\begin{equation}
\label{AbSum}
\sum_{i=1}^d \int_{P_i(0,0)}^{P_i(u,v)} \omega
\end{equation}
is constant modulo periods.  
\end{thm}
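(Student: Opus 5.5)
Since the abelian sum is locally a holomorphic function of $(u,v)$ (the points $P_i(u,v)$ stay distinct and, for $u,v$ small, lie in the smooth locus, where they depend analytically on $(u,v)$ by the implicit function theorem), it suffices to show that its differential vanishes identically. Writing $P_i(u,v) = (x_i(u,v), y_i(u,v))$, we have
\[
d\Big(\sum_{i=1}^d \int_{P_i(0,0)}^{P_i(u,v)} \omega\Big) = \sum_{i=1}^d \frac{p(x_i,y_i)}{f_y(x_i,y_i)}\, dx_i .
\]
Changing the lower limits or the paths only adds periods, so the statement reduces to the vanishing of this trace. First I compute $dx_i$. The point $P_i$ satisfies $f(x_i,y_i)=0$ and $x_i = u y_i + v$. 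Substituting, $y_i$ is a root of $g(y) := f(uy+v, y)$, which has $d$ simple roots near $u=v=0$. Differentiating $g(y_i)=0$ gives
\[
g'(y_i)\, dy_i + f_x(x_i,y_i)\,(y_i\,du + dv) = 0, \qquad g'(y_i) = u f_x + f_y .
\]
Hence $dy_i = -f_x (y_i du + dv)/g'(y_i)$ and $dx_i = y_i du + dv + u\,dy_i = f_y (y_i du + dv)/g'(y_i)$. The factor $f_y$ cancels, and we obtain
\[
\sum_i \frac{p(x_i,y_i)}{f_y}\, dx_i = \sum_{i=1}^d \frac{p(u y_i + v, y_i)\,(y_i\,du + dv)}{g'(y_i)} .
\]

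The key step is the classical residue (Euler--Jacobi) identity. If $g$ is a polynomial of degree $d$ in $y$ with simple roots and $h$ is a polynomial with $\deg h \le d-2$, then $\sum_i h(y_i)/g'(y_i) = 0$. This follows by partial fractions: the sum is the coefficient of $y^{-1}$ in the expansion of $h/g$ at $\infty$, i.e.\ minus the residue at infinity of $h\,dy/g$, and that expansion starts at $y^{\deg h - d} \le y^{-2}$. Apply this with $h(y) = p(uy+v,y)\,y$ (degree $\le d-2$) and with $h(y) = p(uy+v,y)$ (degree $\le d-3$). Both coefficients of $du$ and $dv$ then vanish.

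The main obstacle is confirming that $g$ genuinely has degree $d$ in $y$ with leading coefficient nonzero for small $(u,v)$. This holds because the line $\Delta_{u,v}$ meets $C$ in $d$ distinct affine points, so none of the intersections is at infinity. The leading coefficient is $f_d(u,1)$, where $f_d$ is the top-degree form of $f$. It is nonzero at $u=0$ because $\Delta$ meets $C$ in $d$ affine points, and it stays nonzero nearby. With this in hand, the argument is complete.
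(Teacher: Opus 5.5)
Your argument is correct and essentially complete. The paper does not prove this statement; it quotes it as a consequence of the classical Abel theorem. The natural comparison is therefore with the paper's proof of its formal analogue, Theorem~\ref{FormalAbCharp}.

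There the author argues globally. The line sections $D_{u,v}=\sum_i P_i(u,v)$ are all linearly equivalent, the Abel--Jacobi maps $Q\mapsto[Q-P_i]$ send them to the same point of $J(C)$, and invariant differentials on $J(C)$ pull back to $H^0(C,\Omega^1(C))$ via $T_0J(C)\cong H^1(C,\mathcal{O}_C)$ and Serre duality.

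You instead argue locally. You compute the trace $\sum_i P_i^*\omega$, cancel $f_y$ to obtain $\sum_i p(uy_i+v,y_i)(y_i\,du+dv)/g'(y_i)$, and kill it with the Euler--Jacobi vanishing $\sum_i h(y_i)/g'(y_i)=0$ for $\deg h\le d-2$. This is the same identity the paper itself uses in \S\ref{CounterEx}.

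Your route buys several things:
\begin{enumerate}
\item It is elementary.
\item It shows exactly where the bound $\deg p\le d-3$ enters.
\item It covers singular and reducible $C$ without generalized Jacobians.
\item It makes visible why the paper needs the Jacobian route in characteristic $p$.
\end{enumerate}
On the last point: the identity $\sum_i P_i^*\omega=0$ you prove is purely algebraic and holds over any field. What fails in characteristic $p$ is the final step, from a vanishing differential to a constant sum of integrals. Every series in $u^p,v^p$ has zero differential, and formal primitives may require division by $p$. The paper replaces integration by the Abel--Jacobi map into the formal group of $J(C)$, at the cost of heavier machinery.

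Two small points should be stated explicitly rather than asserted.

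First, you need $[0:1:0]\notin C$, i.e.\ all of $\Delta\cap C$ affine, so that $g$ has degree exactly $d$. This does not follow from the hypotheses as you phrase it; you must impose it as a normalization when choosing coordinates. It is harmless, because the space of forms $p\,dx/f_y$ with $\deg p\le d-3$ does not depend on the choice of projective coordinates.

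Second, you use that the $P_i(0,0)$ are smooth points with $f_y\neq 0$ there. This is what makes $x$ a local coordinate and your expression for $\omega$ valid near $P_i(u,v)$. Both facts follow from $f(0,y)$ having $d$ simple roots, which is worth saying once.
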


Motivated by the Lie-Wirtinger theorem on double translation manifolds and related applications in web geometry, 
in \cite{Grif}, and still working over $\CC$, Grifffiths also studied a sort of \emph{converse of this version of Abel's theorem}.

\begin{thm} 
\label{AbConv}
 Let $C_i$ be $d \ge 3$ analytic curves in a neighborhood of the $y$-axis in $\PP^2$ over $\CC$ meeting the $y$-axis
transversely in $d$ distinct points and let $\Delta_{u,v}$, $P_i(u,v)$ be as above.  If there exist $t_i : C_i \to \CC$ with $t_i'(0) \ne 0$
such that 
\begin{equation}
\label{AbRel}
\sum_{i=1}^d t_i(P_i(u,v)) = 0,
\end{equation}
identically in $u,v$, then the $C_i$ lie on an algebraic curve $C$ of degree $d$ and $dt_i = \omega|_{C_i}$ for some Abelian differential $\omega$ as in 
\eqref{AbDiff}.  
\end{thm}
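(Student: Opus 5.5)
We follow the classical route of Lie--Darboux--Griffiths: convert the functional relation \eqref{AbRel} into a linear differential system for the slopes of the curves, and show that this system forces the curves onto an algebraic curve.

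First we set up local coordinates. Near the $y$-axis each $C_i$ is a graph $x = g_i(y)$, with $g_i(0)=0$ and $C_i$ meeting the axis at $(0,y_i)$, $y_i$ distinct. The intersection point $P_i(u,v)$ has $y$-coordinate $y_i(u,v)$, the solution of $g_i(y) = uy+v$ near $y_i$; this exists by the implicit function theorem because of transversality. Differentiating gives
\[
\frac{\partial y_i}{\partial u} = y_i\,\frac{\partial y_i}{\partial v}.
\]
So each $y_i(u,v)$ solves the Burgers-type equation $\partial_u y = y\,\partial_v y$. Write $t_i$ as a function $\tau_i(y)$ of $y$ along $C_i$. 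Applying $\partial_v$ and $\partial_u$ to \eqref{AbRel}, and more generally the operators $\partial_u^k\partial_v^{m}$, produces the identities
\[
\sum_i \tau_i'(y_i)\,y_i^k\,\partial_v y_i = 0 \quad\text{for } k=0,1,
\]
together with higher-order consequences.

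The key step is to show that $\sum_i \tau_i'(y_i)\,y_i^{k}\,\partial_v y_i$, viewed suitably, vanishes for $k=0,\dots,d-2$ and that the points $(x,y)$ of the $C_i$ satisfy a degree-$d$ equation. Following Griffiths, we set $\eta_i = \tau_i'(y_i)\,\partial_v y_i$. The relations $\partial_u(\cdot) = \partial_v(y\,\cdot)$ let us express $\partial_u$-derivatives as $\partial_v$-derivatives of the sums $\sum_i y_i^k\eta_i$. Then we form the elementary symmetric functions $\sigma_j(u,v)$ of $y_1,\dots,y_d$ and show that they are polynomials of degree $\le j$ in $(u,v)$; equivalently, the curve $\prod_i (y-y_i(u,v))$ is the restriction to $\Delta_{u,v}$ of a fixed degree-$d$ polynomial $f(x,y)$. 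The mechanism is as follows. The condition $t_i'(0)\ne 0$ makes the vectors $(\eta_i)$ nonzero. Repeated differentiation of $\sum_i\eta_i=0$, using the Burgers equation, yields a Vandermonde-type system. Because $d\ge 3$ there is a nontrivial relation, and this relation forces enough $v$-derivatives of the power sums $\sum_i y_i^k$ to vanish (Abel--Lie type argument). Hence $\sigma_j$ is polynomial in $v$ of the right degree, and by the $u$-equation also in $u$. This gives $f$ of degree $d$ with $C_i\subset \{f=0\}$.

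Finally, once $C=\{f=0\}$ is known, we identify the $dt_i$. Using residues, $dt_i$ restricted to each branch is $\eta_i$-data that glue to a meromorphic differential $\omega = q\,dx/f_y$ on $C$. The relation $\sum_i \omega(P_i)$-type trace vanishing (trace of $\omega$ under projection from lines is zero) forces $q$ to be a polynomial of degree $\le d-3$, by the standard characterization of abelian differentials as those with vanishing trace, i.e.\ the converse direction of Theorem~\ref{AbClassical}.

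The main obstacle is the middle step: extracting algebraicity of the $\sigma_j$ from a single relation. In particular we must show that the differentiated identities span enough to kill the required derivatives, which relies on $d\ge3$ and the nonvanishing of $t_i'$. We would try first an induction on the order of $v$-derivatives, using the Burgers equation to trade $\partial_u$ for $\partial_v$, and a Vandermonde determinant in the distinct $y_i$ to invert the resulting system.
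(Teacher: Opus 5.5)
The paper does not prove this statement; it is quoted from Griffiths. Your proposal therefore has to stand on its own, and it has a genuine gap exactly at the step you flag.

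\textbf{What is sound.} You have $\partial_u y_i=y_i\,\partial_v y_i$. With $\eta_i=\partial_v[t_i(P_i(u,v))]$ one gets $\sum_i\eta_i=\sum_i y_i\eta_i=0$, and $\partial_u(\phi(y_i)\eta_i)=\partial_v(y_i\phi(y_i)\eta_i)$ for every $\phi$. Define $\eta_i$ this way rather than through $x=g_i(y)$ and $\tau_i(y)$: transversality to the $y$-axis permits horizontal tangents, where $y$ is not a coordinate on $C_i$.

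\textbf{The middle step.} It is not an argument, and two of its stated targets are false.
\begin{enumerate}
\item The sums $S_k=\sum_i y_i^k\eta_i$ do not vanish for $k\le d-2$ in general. For a quartic $f=0$ and the relation coming from $\omega=y\,dx/f_y$ one gets $S_2=1/f_4(u,1)\neq 0$, where $f_4$ is the top-degree form. Only $S_0=S_1=0$ hold for every relation.
\item The $\sigma_j$ are not polynomials in $u$. Since $f(uy+v,y)=f_d(u,1)\prod_i(y-y_i)$, $\sigma_j$ is a polynomial of degree $\le j$ in $v$ divided by $f_d(u,1)$. That factor is nonconstant unless $f_d$ is a multiple of $y^d$, so your ``equivalently'' fails.
\end{enumerate}
Beyond this, nothing explains how identities for the weighted sums $S_k$ control the unweighted symmetric functions of the $y_i$. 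Algebraicity is a global property along the line, and a local induction on $v$-derivatives cannot supply it. For $d=3$ elimination does work: $S_3=\sigma_1S_2$, $\partial_vS_2=\partial_uS_1=0$, and $S_2\neq0$ by Vandermonde, so $\partial_v^2\sigma_1=0$. Even then you would still need Wood's algebraicity criterion, and for $d\ge4$ the elimination does not close.

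\textbf{The last step.} It has the same defect. The $t_i$ live only on small arcs, so there is nothing to ``glue'' into a global form on $C$, and the trace characterization you invoke presupposes such a form.

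\textbf{The missing idea.} Package all the $S_k$ into one generating function
$$R(u,v;T)=\sum_{i=1}^d\frac{\eta_i}{T-y_i(u,v)}=\sum_{k\ge0}S_k\,T^{-k-1}.$$
Taking $\phi(y)=1/(T-y)$ above and using $S_0=0$ gives $\partial_uR=T\,\partial_vR$. So $R$ is constant along each line: $R(u,v;T)=\rho(uT+v,T)$ for a function $\rho$ on a neighborhood $U$ of $\Delta_0$ in $\PP^2$. This $\rho$ is meromorphic with simple poles exactly along $\bigcup_i C_i$, with residues $\eta_i\neq0$; this is where $t_i'(0)\neq0$ enters. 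Because $S_0=S_1=0$, $\rho$ vanishes to order $\ge3$ at the point at infinity of every line.

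Since $\PP^2\setminus U$ is a compact subset of $\CC^2=\PP^2\setminus\Delta_0$, Levi's (Hartogs') extension theorem makes $\rho$ rational, say $\rho=q/F$. Then:
\begin{enumerate}
\item The polar divisor meets a general line in $U$ exactly in the $d$ simple points $P_i(u,v)$, so $\deg F=d$ and each $C_i\subset\{F=0\}$.
\item The zero of order $\ge3$ at infinity gives $\deg q\le d-3$.
\item Write $dt_i=h_i\,dx$. The residue of $R$ at $T=y_i$ is $\eta_i=h_i\,\partial_v x_i=h_iF_y/(uF_x+F_y)$, while the residue of $q/F$ there is $q/(uF_x+F_y)$. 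Comparing them gives $dt_i=q\,dx/F_y$ on $C_i$.
\end{enumerate}
This yields both conclusions of the theorem at once.
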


The connections with double translation manifolds and characterizations of Jacobians are discussed in \cite{M} and \cite{Lit1}. 
The applications to web geometry are discussed in the classic book \cite{BB} of Blaschke and Bol and the more 
recent text \cite{PP} of Pereira and Pirio.

The idea is that \emph{addition laws} such as \eqref{AbRel} come only from integrating Abelian differentials on 
algebraic curves, or in more sophisticated terms, from the Jacobian varieties of curves.  One perhaps unexpected
feature here is that the curve $C$ might have singularities away from the neighborhood of $\Delta_0$.  In that 
case, $\omega$ will be a \emph{dualizing differential} on $C$ in the sense of Rosenlicht and the relation
will take place in the \emph{generalized Jacobian} of the curve.

All of the above can be rephrased purely algebraically so it makes sense to ask whether some statement like
Theorem~\ref{AbConv} is true in characteristic $p$ as well.   The author has studied the case $d = 3$ in characteristic
$p$ in \cite{Lit} and intends to return to this question in a subsequent paper.

\section{Formal Abel relations}

To prepare for an attack on the converse of Abel for general $d$,  in these notes,
we consider how the characteristic $0$ version of Abel's theorem in Theorem~\ref{AbClassical} goes over to local statements
valid over any algebraically closed field, including fields of characteristic $p$.  The translation goes as follows.  

Assuming $k$ is an algebraically closed field of characteristic $p$, let $\PP$ denote the formal completion
of $\PP^2$ along the line $\Delta_0$ defined by $x = 0$.   Let $C$ be an algebraic curve of degree $d$ with an affine
equation of the form 
$$0 = f(x,y) = y^d + c_1(x) y^{d-1} + \cdots + c_{d-1}(x) y + c_d(x)$$
where $\deg c_i(x) \le i$.  Assume that the equation 
$$0 = f(0,y) = y^d + c_1(0) y^{d-1} + \cdots + c_{d-1}(0) y + c_d(0)$$
has $d$ distinct roots $a_{i,0}$ in $k$.  
Then the formal implicit function theorem implies that there are $d$ formal curves $C_i$ defined by 
$y = f_i(x) \in k[[x]]$ that give the local expansions of $C$ at the points $(0,a_{i,0})$.  
In other words, in the ring $k[[x]][y]$, we have a factorization
$$f(x,y) = (y - f_1(x))(y - f_2(x)) \cdots (y - f_d(x)),$$
where 
$$f_i(x) = \sum_{n=0}^\infty a_{i,n} x^n$$
for some $a_{i,n} \in k$.

For simplicity, we will mostly concentrate on the case where the projective closure curve of the $C$ in $\PP^2$
is a smooth curve of degree $d$, hence of genus $g = (d - 1)(d - 2)/2$.

Let ${\rm Spf}\, k[[u,v]]$
represent the formal completion of the dual projective space $(\PP^2)^\vee$ at the point corresponding to $\Delta_0$.
We again parametrize lines ``near the $y$-axis'' as lines $\Delta_{u,v}$ defined by $x = uy + v$.  By the formal 
implicit function theorem again, we can formally solve the equations $y = f_i(uy + v)$ for $y = y_i(u,v)$, then 
take $x_i(u,v) = u y_i(u,v) + v$ to obtain the power series for 
$P_i(u,v) = \Delta_{u,v} \cap C_i$ as 
$$P_i(u,v) = (x_i(u,v), y_i(u,v)).$$

Finally, the sum in \eqref{AbRel} must be replaced by a more general rule of combination.  One (maximal) choice is the 
\emph{commutative formal group law} over $k$ corresponding to the group operation in the (generalized) Jacobian $J(C)$.  In more 
detail, taking a suitable set of local coordinates $X_1, \ldots, X_g$ at the origin on $J(C)$, where
$g = g(C)$ is the (arithmetic) genus, the group law will be given by a $g$-tuple $G(X,Y)$ of power series in $X = (X_1, \ldots, X_g)$, 
$Y = (Y_1, \ldots, Y_g)$ satisfying 
\begin{enumerate}
\item $G(X,Y) \equiv X + Y \mod \deg 2$,
\item $G(G(X,Y),Z) = G(X,G(Y,Z))$ where $Z = (Z_1,\ldots,Z_g)$ is another collection of indeterminates,
\item $G(X,Y) = G(Y,X)$.
\end{enumerate}
It is not difficult to show that for any such $G(X,Y)$, there is a formal inverse $i(X)$, a $g$-tuple of 
power series in $X$ such that 
$$G(X,i(X)) = G(i(X),X) = 0.$$

With this terminology, we can now state the following.

\begin{thm}  
\label{FormalAbCharp}
Let $C$ be a smooth curve and let $C_i$ be the $d \ge 3$ formal curves in $\PP$ defined as above. Let $G$ be the
$g = (d-1)(d-2)/2$-dimensional commutative formal group law over $k$ obtained as the
formal completion of the group law on the Jacobian $J(C)$ at its origin.
Then there are maps $t_i : C_i \to G$ such that 
\begin{enumerate}
\item[(a)]  for a basis $\{\Omega_j : j = 1, \ldots g\}$ of the invariant differentials on $G$, the $1$-forms $t_i^*(\Omega_j)$ are
the local expansions of linearly independent $1$-forms on $C$, and
\item[(b)] there is a (maximal) \emph{Abel relation}
\begin{equation*}
\prod_G t_i(x_i(u,v)) = 0.
\end{equation*}
\end{enumerate}
(Here $\prod_G$ means the operation of combining the $t_i(x(u,v))$ using the 
formal group law $G$.  The associativity and commutativity of $G$ means that 
this is well-defined.)
\end{thm}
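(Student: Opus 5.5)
The plan is to build the maps $t_i$ from the Abel--Jacobi map of $C$ and to obtain the relation from the linear equivalence of the hyperplane sections $\Delta_{u,v}\cdot C$. No integration is used, so the argument works over any algebraically closed $k$.

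\textbf{Construction of the $t_i$.} Since $C$ is a smooth plane curve of degree $d$, the divisors $H_{u,v}=\Delta_{u,v}\cdot C$ are all linearly equivalent to $\mathcal{O}_C(1)$. Fix the base points $P_i(0,0)=(0,a_{i,0})$ and consider the morphism $\alpha_i : C \to J(C)$, $P \mapsto \mathcal{O}_C(P - P_i(0,0))$. It sends $P_i(0,0)$ to the origin. Restricting $\alpha_i$ to the formal neighborhood $C_i$ of $P_i(0,0)$ and using the parametrization $x \mapsto (x,f_i(x))$ gives a formal map $t_i : {\rm Spf}\,k[[x]] \to \widehat{J(C)} = G$ with $t_i(0)=0$. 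These are the maps we want.

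\textbf{Part (a).} The invariant differentials on $G$ are the completions of the translation-invariant $1$-forms on $J(C)$. It is standard (Rosenlicht--Serre) that $\alpha_i^*$ identifies $H^0(J(C),\Omega^1)$ with $H^0(C,\Omega^1_C)$, and that this map does not depend on the base point, because translations act trivially on invariant forms. So for a basis $\Omega_j$, the $\alpha_i^*\Omega_j$ form a basis of the regular differentials on $C$. Concretely, these are the forms $p(x,y)\,dx/f_y$ with $\deg p \le d-3$, of which there are $g$. The form $t_i^*\Omega_j$ is the expansion of $\alpha_i^*\Omega_j$ along $C_i$. Linear independence is preserved under this expansion, since a nonzero rational form cannot vanish formally at a point. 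This gives (a).

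\textbf{Part (b).} The points $P_i(u,v)$ are $k[[u,v]]$-points of $C$ concentrated near $P_i(0,0)$. In $J(C)(k[[u,v]])$ we have
\begin{equation*}
\sum_i \alpha_i(P_i(u,v)) = \Big[\sum_i P_i(u,v) - \sum_i P_i(0,0)\Big] = [H_{u,v} - H_{0,0}] = 0,
\end{equation*}
because $H_{u,v}-H_{0,0}$ is the divisor of the rational function $(x-uy-v)/x$, base-changed to $k[[u,v]]$. Reading this identity in formal coordinates at the origin turns the group sum into $\prod_G$, which gives $\prod_G t_i(x_i(u,v))=0$.

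\textbf{Main obstacle.} The delicate step is making the families argument rigorous. We need the relative Picard functor over ${\rm Spf}\,k[[u,v]]$, or its Artinian truncations $k[u,v]/(u,v)^n$, to justify that the sum of the $\alpha_i$ applied to the points of a relative Cartier divisor equals the class of that divisor. Concretely, this means the Abel--Jacobi map on $C^{(d)}$ composed with the linear system map. Over a complete local base this class lies in $J(C)$, which represents ${\rm Pic}^0$ because $C$ has a rational point. The relative divisor $H_{u,v}$ splits into the $d$ sections $P_i(u,v)$ by the formal implicit function theorem, as already noted above. The fiber of $C^{(d)} \to J(C)$ over $[\mathcal{O}(1)]$ is the $\PP^2$ of lines, so the whole family $u,v$ maps to a single point. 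I would present the argument this way, via $C^{(d)}$, to avoid relative Picard technicalities.

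The word ``maximal'' is then justified because every such relation factors through $J(C)$ by its universal property. I would only remark on this rather than prove it.
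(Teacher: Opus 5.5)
Your proposal is correct and follows essentially the paper's route. The $t_i$ are the expansions of the Abel--Jacobi maps $P\mapsto[P-P_i(0,0)]$. Part (a) comes from the isomorphism between invariant differentials on $J(C)$ and $H^0(C,\Omega^1_C)$, which the paper sketches via $T_0J(C)\cong H^1(C,\mathcal{O}_C)$ and Serre duality. Part (b) comes from the linear equivalence $\sum_i P_i(u,v)\sim\sum_i P_i(0,0)$, which you justify more carefully than the paper does, using base-point independence, the function $(x-uy-v)/x$ over $k[[u,v]]$, and the factorization through $C^{(d)}$.

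The only caveat is your closing remark, which you should drop. Maximality does not follow from the universal property of $J(C)$, because that property concerns algebraic morphisms from $C$, not formal maps on the germs $C_i$; whether formal Abel relations come from Jacobians is essentially the converse problem the paper is aiming at. In the paper, ``maximal'' only means that the full $g$-dimensional formal group of $J(C)$ is used.
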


\begin{proof}
The existence of such a formal Abel relation follows from general facts about Jacobians that are valid over
any algebraically closed field.  As a general reference, we suggest the chapter \cite{Milne} from the 
volume from the 1984 Storrs  conference on Faltings' proof of the Mordell conjecture.  In Milne's notation, 
given a smooth point $P \in C$, there is a canonical mapping from $C$ to its Jacobian (viewed as the group
of linear equivalence classes of divisors of degree zero):
\begin{align*}
f^P : C &\longrightarrow J(C)\\
Q &\mapsto [Q - P].
\end{align*} 
We first claim that, as in Proposition 2.2 from \cite{Milne},  $(f^P)^*$ gives an isomorphism between the invariant differentials 
on $J(C)$ and the Abelian differentials on $C$:
$$(f^P)^* : H^0(J(C), \Omega^1(J(C))) \stackrel{\sim}{\longrightarrow} H^0(C, \Omega^1(C)).$$
Proposition 2.1 of \cite{Milne} shows that there is a canonical isomorphism
between the tangent space to $J(C)$ at the origin and the cohomology
group $H^1(C,\mathcal{O}_C)$:
\begin{equation}
\label{Tang}
T_0(J(C))  \stackrel{\sim}{\longrightarrow} H^1(C, \mathcal{O}_C).
\end{equation}
Our claim is true because $J(C)$ is a group variety so there 
is an isomorphism 
$$H^0(J(C), \Omega^1(J(C))) \stackrel{\sim}{\longrightarrow} T_0(J(C))^\vee.$$ 
The dual of the isomorphism given in \eqref{Tang} is an isomorphism
$$T_0(J(C))^\vee \stackrel{\sim}{\longrightarrow} H^1(C,\mathcal{O}_C)^\vee.$$
Moreover, by Serre duality, 
$$H^1(C, \mathcal{O}_C)^\vee \stackrel{\sim}{\longrightarrow} H^0(C,\Omega^1(C)).$$
Checking the commutativity of the diagram
$$
\begin{tikzcd}
H^0(J(C), \Omega^1(J(C)))  \arrow[r, "(f^P)^*"]  \arrow[d,"\sim" {anchor=south, rotate=90}] & H^0(C,\Omega^1(C)) \\
T_0(J(C))^\vee \arrow[r,"\sim"] & H^1(C,\mathcal{O}_C)^\vee  \arrow[u, "\sim" {anchor=south,rotate=90}]
\end{tikzcd}
$$       
establishes the claim in part (a). 

The $t_i$ in the statement are the local expansions of the mappings $f^{P_i}$ at the $P_i = P_i(0,0) = (0,a_{i,0})$ in 
terms of the local coordinate $x$ on $C_i$.  Note that $t_i(0) = 0$ for all $i$.   Part (a) of the theorem follows from the first claim.
Since $D_{u,v} = \sum_i P_i(u,v)$ is linearly equivalent to $D_{0,0} = \sum_i P_i(0,0)$
for all $u,v$, it follows that 
$$\prod_G  t_i (x_i(u,v)) = 0.$$
This is part (b) of the theorem.
\end{proof}

Although we have stated this result only for smooth curves, it extends without difficulty to the case
that $C$ is a reduced plane curve of degree $d$ and $J(C)$ is the generalized Jacobian.

\section{The explicit form of formal Abel relations}

Since the theory of integration really works only in characteristic $0$, we want to understand how the $t_i(x_i(u,v))$
in formal Abel relations as in Theorem~\ref{FormalAbCharp} relate to the integrals of the Abelian differentials $\omega_j$ on $C$ 
in the equations
$$\sum_{i=1}^d \int_{P_i(0,0)}^{P_i(u,v)} \omega_j$$
in the theorem over $\CC$.

\vskip 10pt
\noindent
{\bf Notation.}
\vskip 10pt
\noindent
(a) As before, the equations of the $C_i$ will be written as 
$$y = f_i(x) = \sum_{n=0}^\infty a_{i,n} x^n.$$

\vskip 10pt
\noindent
(b)  Then by a direct computation, the coordinate $x_i(u,v)$ of the point $P_i(u,v)$ is
\begin{align}
\label{firstterms}
x_i(u,v) &= a_{i,0} u + v + a_{i,1} a_{i,0} u^2 + a_{i,1} uv + (a_{i,0}^2 a_{i,2} + a_{i,0} a_{i,1}^2) u^3 + \cdots \\
             &\quad (a_{i,0}^3 a_{i,3} + 3 a_{i,0}^2 a_{i,1}a_{i,2} + a_{i,0} a_{i,1}^3) u^4 + \cdots  \nonumber
\end{align}

\vskip 10pt
\noindent
(c)  If we assign $a_{i,n}$ the weight $n$, then the coefficient of $u^r v^\ell$ in $x_i(u,v)$ is homogeneous of 
degree $r - \ell$ and isobaric of weight $r + \ell - 1$ as a polynomial in the $a_{i,q}$ for $q \le n - 1$.
Note that factors of $a_{i,0}$ contribute to the degree but not to the weight.

\vskip 10pt
\noindent
(d)  The exact form of the terms and the numerical coefficients appearing in the coefficient of $u^n$ comes from 
the classical \emph{Lagrange reversion theorem} (see \cite[p. 132--133]{WW}) for the series expansion of a solution of a functional equation.  Here
we consider the equation
$$w = u f_i(w)$$
whose solution $w = x_i(u,0)$ gives the terms in $u$ alone in the series $x_i(u,v)$.
The Lagrange reversion formula shows that the solution $w$ is given by 
$$w = \sum_{k=1}^\infty \frac{u^k}{k!} \frac{\partial^{k-1}}{\partial x^{k-1}}\left(f_i(x)^k\right).$$
We then set $x = 0$ to find the terms in $x_i(u,0)$.  In particular, the terms
appearing in the coefficient of $u^{n+1}$ are in one-to-one correspondence with \emph{partitions} of the 
integer $n$.  Parts $j$ in the partition correspond to factors of $a_{i,j}$ for $j = 1, \ldots, n$ and the numerical
coefficient of the term with $a_{i,n}^{e_n} a_{i,n-1}^{e_{n-1}} \cdots a_{i,1}^{e_1}$ is a combination of factors from
the derivatives of the powers of $x$.
For example in the formula \eqref{firstterms} above, in the coefficient of $u^4$:
\begin{itemize}
\item The term $a_{i,0}^3 a_{i,3}$ corresponds to the partition $3 = 3$,
\item the term $3 a_{i,0}^2 a_{i,1}a_{i,2}$ corresponds to the partition $3 = 1 + 2$ 
and
\item the term $a_{i,0} a_{i,1}^3 $ corresponds to the partition $3 = 1 + 1 + 1$.
\end{itemize}
The appropriate powers of $a_{i,0}$ are included to bring the total degree to $4 = 3 + 1$.
According to the same patterns, the coefficient of the next power of $u$, namely $u^5$, will be
$$a_{i,0}^4 a_{i,4} + 4 a_{i,0}^3 a_{i,1} a_{i,3} + 2 a_{i,0}^3 a_{i,2}^2 + 6 a_{i,0}^2 a_{i,1}^2 a_{i,2} + a_{i,0} a_{i,1}^4.$$
(We note that---apart from the powers of $a_{i,0}$---there is a close similarity between this
and the form of the Fa\`a di Bruno formula for the fourth derivative of a composition.  However, the numerical coefficient 
of the $a_{i,0}^3 a_{i,2}^2$ is $2$ here and \emph{not} the $3$ that multiplies the term 
$f''(g(x)) \cdot g''(x)$ in $\frac{d^4}{dx^4}(f(g(x))$.)

\vskip 10pt
\noindent
(e)  The coefficient of $u^{m - \ell} v^\ell$ in $x_i(u,v)$ can be obtained from the coefficient of $u^m$ by applying the
formal differential operator $\frac{1}{\ell!} \frac{\partial^\ell}{\partial a_{i,0}^\ell}$.   This comes down to the binomial
expansion of $(a_{i,0} u + v)^m$ which appears repeatedly in the computation of $x_i(u,v)$. Thus, for instance, from 
the coefficient of $u^4$ given above, we find the coefficient of $u^3 v$ as 
$$3 a_{i,0}^2 a_{i,3} + 6 a_{i,0} a_{i,1} a_{i,2} + a_{i,1}^3,$$
and then the coefficient of $u^2 v^2$ is
$$3 a_{i,0} a_{i,3} + 3 a_{i,1} a_{i,2}$$
and the coefficients of $u v^3$ and $v^4$ are $a_{i,3}$ and $0$ respectively.   For future reference we note that
\begin{align}
\label{BCE}
\frac{1}{\ell!} \frac{\partial^\ell}{\partial a_{i,0}^\ell} a_{i,0}^m &= \frac{1}{\ell!} m(m-1) \cdots (m-\ell + 1) a_{i,0}^{m-\ell} \nonumber \\
 &= \binom{m}{\ell} a_{i,0}^{m - \ell}.
\end{align}
For instance, the coefficients $1,3,3,1$ of the terms in $u^{3-\ell} v^\ell$ containing the $a_{i,3}$ are the binomial coefficients in 
 $\binom{3}{\ell} a_{i,0}^{3-\ell}$.

\vskip 10pt
\noindent
The patterns in (b), (c), (e) are proved in Lemma 2.1 of \cite{Lit}.  As noted above, (d) follows from the description of 
$w = x_i(u,0) = u y_i(u,0)$ as the solution of $w = u f_i(w)$.  We note, however, that there is a 
missing term $2 a_{i,0}^3 a_{i,2}^2$ of degree $5$ and weight $4$ in the equation for the coefficient of $u^5$ in part (a) of that Lemma.  
(Fortunately), that term does not enter in a crucial way into any of the arguments presented in \cite{Lit}.

\vskip 10pt
\noindent
(f)  To illustrate some of the differences between the cases of characteristic zero and finite characteristic, we include
the following brief discussion of one of the basic features of formal group laws.

\begin{lem}
\label{FGNF1}
 Every formal group law over an algebraically closed field $k$ of characteristic $p$ is strictly isomorphic to one of the form
$$H(X,Y) \equiv X + Y \bmod \deg p.$$
If $k$ has characteristic zero, then every $m$-dimensional formal group law is strictly isomorphic to the additive $m$-dimensional 
group law
$$H(X,Y) = X + Y.$$
\end{lem}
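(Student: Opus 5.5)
The plan is to prove the statement by induction on degree, killing the lowest-order nonlinear terms of the group law one degree at a time with polynomial coordinate changes of the form $\varphi(X) = X + \Phi(X)$. Such a change is automatically a strict isomorphism, since it is the identity modulo degree $2$. Let $G$ be an $m$-dimensional formal group law. Suppose that, after some strict change of coordinates, we have
$$G(X,Y) \equiv X + Y + \Gamma_n(X,Y) \bmod \deg (n+1),$$
where $\Gamma_n$ is an $m$-tuple of homogeneous polynomials of degree $n \ge 2$.

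The first step is to extract the constraints on $\Gamma_n$ from the axioms. Comparing terms of degree $n$ in associativity $G(G(X,Y),Z) = G(X,G(Y,Z))$ shows that each component of $\Gamma_n$ satisfies the additive $2$-cocycle identity
$$\Gamma_n(Y,Z) - \Gamma_n(X+Y,Z) + \Gamma_n(X,Y+Z) - \Gamma_n(X,Y) = 0.$$
Commutativity gives $\Gamma_n(X,Y) = \Gamma_n(Y,X)$.

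Next, a coordinate change $\varphi(X) = X + \Phi(X)$ with $\Phi$ homogeneous of degree $n$ changes $\Gamma_n$ by the coboundary
$$(\delta\Phi)(X,Y) = \Phi(X+Y) - \Phi(X) - \Phi(Y),$$
up to sign, and leaves the lower-degree terms untouched. So the induction step reduces to a symmetric cocycle lemma: \emph{every symmetric homogeneous $2$-cocycle of degree $n$ in the vector variables $X,Y$ is a coboundary, provided $n < p$ (or always, in characteristic zero).}

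To prove the lemma I would first use the symmetry and the cocycle identity to reduce to a statement about each multihomogeneous piece in the $m$ coordinates. On each piece this becomes the one-variable situation of Lazard's lemma. There the space of symmetric cocycles of degree $n$ is spanned by
$$C_n(X,Y) = \tfrac{1}{\nu}\bigl((X+Y)^n - X^n - Y^n\bigr),$$
where $\nu$ is $p$ if $n$ is a power of $p$ and $1$ otherwise. For $n<p$ we have $\nu = 1$, so $C_n = \delta(X^n)$ is a coboundary.

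A more hands-on alternative in the multivariable case is to set $X = Y$ in the cocycle identity. From the resulting relations one can solve for $\Phi$ as a combination of $\Gamma_n(X,X)$, $\Gamma_n(2X,X)$, and so on, with denominators of the form $2^n-2$ or more generally integers of size at most $n!$. These denominators are units exactly when $n < p$, or in characteristic zero. I expect this lemma, in particular checking that symmetric cocycles really are coboundaries in several variables and that only primes $\le n$ appear as denominators, to be the main obstacle. I would settle it by citing or reproving Lazard's comparison lemma in its multidimensional form, as in Hazewinkel's book on formal groups.

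Granting the lemma, the induction runs as follows. In characteristic $p$, apply it successively for $n = 2, 3, \ldots, p-1$. This takes finitely many steps, and the composite of the resulting coordinate changes is a strict isomorphism to a law $H$ with $H(X,Y) \equiv X + Y \bmod \deg p$. In characteristic zero the lemma applies for every $n$, so the step can be repeated indefinitely. The $n$-th coordinate change differs from the identity only in degree $n$, so the infinite composite $\cdots\circ\varphi_3\circ\varphi_2$ converges in the $(X)$-adic topology to a strict isomorphism. The transformed law then agrees with $X+Y$ modulo every degree, so it equals $X+Y$.
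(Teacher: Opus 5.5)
Your argument is correct, but it takes a different route from the paper.

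\textbf{Comparison of routes.} The paper builds a truncated strict logarithm directly. It integrates the invariant differentials of $G$ degree by degree, stops at degree $p-1$ where a division by $p$ would be needed, and uses the resulting $\overline{f}$ as the strict isomorphism. In characteristic zero the integration never stops and gives the full logarithm. You instead remove the nonlinear terms one homogeneous degree at a time by coboundary corrections.

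The paper's version is shorter and produces the logarithm explicitly. However, it uses without comment two facts: that the invariant differentials of a commutative law are closed, and that the truncation is still additive modulo degree $p$. Your version is purely algebraic and isolates exactly where $p$ enters. It also goes one step further at no cost. By Lazard's lemma, in degree $p$ the symmetric cocycles are, modulo coboundaries, spanned by the $C_p(X_i,Y_i)$ coming from the multidegrees $pe_i$. That is exactly the normal form of Theorem~\ref{FGLawForm}.

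\textbf{A correction to your fallback.} The ``hands-on'' route via $X=Y$ does not give the sharp bound. The integer $2^n-2$ need not be a unit for $n<p$: for $p=7$, $n=4$ it equals $14$. Likewise, integers bounded by $n!$ can have prime factors larger than $n$.

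For $n<p$ you need neither Lazard's lemma nor the reduction to multihomogeneous pieces (which, for mixed multidegrees, is not literally the one-variable situation). Work componentwise. Put $\psi_j(Y)=\frac{\partial\Gamma_n}{\partial X_j}(0,Y)$, and write $\partial^{(1)}_j$, $\partial^{(2)}_j$ for the derivatives in the $j$th variable of the first and second argument. Differentiating the cocycle identity in $X_j$ at $X=0$, and then using symmetry, gives
\[
\partial^{(1)}_j\Gamma_n(Y,Z)=\psi_j(Y+Z)-\psi_j(Y),\qquad \partial^{(2)}_j\Gamma_n(Y,Z)=\psi_j(Y+Z)-\psi_j(Z).
\]
Euler's identity then yields
\[
n\,\Gamma_n(Y,Z)=\sum_j\bigl(Y_j\,\partial^{(1)}_j\Gamma_n+Z_j\,\partial^{(2)}_j\Gamma_n\bigr)(Y,Z)=\Phi_0(Y+Z)-\Phi_0(Y)-\Phi_0(Z),
\]
where $\Phi_0(X)=\sum_j X_j\psi_j(X)$. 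So $\Gamma_n$ is a coboundary as soon as $n$ is invertible in $k$, i.e.\ for $2\le n\le p-1$, and for all $n$ in characteristic zero.

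The $\psi_j$ are, up to sign, the degree-$(n-1)$ terms of the invariant differentials of $G$. So dividing $\Phi_0$ by $n$ is precisely the paper's integration step in infinitesimal form, and the two proofs are closer than they first appear.
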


\begin{proof}
Briefly, the idea of a naive proof is as follows. (We will use much more precise structure theorems
later.)   Saying two $m$-dimensional formal group laws $G, H$ are strictly isomorphic means that there is 
an $m$-tuple of power series $f\in k[[t_1,\ldots,t_m]]^m$ satisfying 
$$f(t_1,\ldots, t_m) \equiv (t_1, \ldots, t_m) \bmod \deg 2$$  
such that $f(G(X,Y)) = H(f(X), f(Y))$.    Taking $H(X,Y) = X + Y$, the $m$-dimensional formal
additive group law, we may seek to construct
a \emph{strict logarithm} for $G$, that is, power series 
$f \in (k[[t_1,\ldots, t_m]])^m$ such that $f(t_1, \ldots, t_m) \equiv (t_1,\ldots, t_m) \bmod \deg 2$ and 
$$f(G(X,Y)) = f(X) + f(Y).$$
It is not difficult to see that such logarithms always exist for $G$ \emph{in characteristic zero}.
For instance, if 
$$G(X,Y) = X + Y + XY = (X+1)(Y+1) - 1$$ 
is the 1-dimensional multiplicative formal group law,
then the characteristic zero logarithm is given by 
$$f(t) = \log(t+1) = t - \frac{t^2}{2} + \frac{t^3}{3} - \frac{t^4}{4} + \cdots .$$
This is true since
\begin{align*} f(G(X,Y)) &= \log((X+1)(Y+1))\\
&= \log(X+1) + \log(Y+1) = f(X) + f(Y).
\end{align*}
If $k$ has characteristic $p$, the construction of the logarithm can be carried out degree by degree
using the invariant differentials on $G$ until division by $p$ would have to
occur.  But then the truncated logarithm $\overline{f}(t)$ will yield a strict isomorphism between the
formal group law $G(X,Y)$ and a formal group law satisfying
$$\overline{f}(G(X,Y)) \equiv G(\overline{f}(X), \overline{f}(Y)) \equiv  \overline{f}(X) + \overline{f}(Y) \bmod \deg p.$$
This establishes the claim.
\end{proof}

\vskip 10pt
\noindent
(g)  Finally, in the maximal Abel relation for a curve of genus $g$, the $t_i$ will have the form
$$t_i = (t_i^{(1)} (x), \ldots, t_i^{(g)}(x)),$$
where 
$$t_i^{(j)}(x) = \sum_{n=1}^\infty A_{i,n}^{(j)} x^n \in k[[x]].$$

For the moment, we will view the $a_{i,n}$ and the $A_{i,n}^{(j)}$ as indeterminates.
Writing out the relation 
$$\prod_G t_i(x_i(u,v)) = 0$$
as in Theorem~\ref{FormalAbCharp}
and setting the coefficients of $u^m v^\ell$ equal to zero for all $m, \ell \ge 0$, we get 
a universal system of polynomial equations in the $a_{i,n}$ and the $A_{i,n}^{(j)}$ with $\ZZ$-coefficients
whose solutions describe all possible maximal Abel relations.  

\vskip 10pt
We will use the following notation.

\begin{defn}  
\label{AXDef}
{\rm For each $n \ge 2$, and letting $1\le i \le d$ in each case we will consider:
\begin{enumerate}
\item[(1)]  First, the space of maximal Abel relations modulo degree $n$ is
\begin{align*}
\mathcal{A}_n^{\rm max} &= \{ (x_i \bmod \deg n, t_i \bmod \deg n, G \bmod \deg n)\ \\
&\qquad \vert\ \prod_G t_i(x_i(u,v)) \equiv 0 \bmod \deg n \},
\end{align*}
where $G$ is a formal group law of dimension $g = (d-1)(d-2)/2$.
\item[(2)]  Disregarding the mappings $t_i$, we also let
\begin{align*}
\mathcal{X}_n^{\rm max} &= \{ (x_i \bmod \deg n)\ \vert\ \exists t_i, G \bmod \deg n \text{ such that } \\
&\qquad (x_i, t_i, G) \in \mathcal{A}_n^{\rm max}\}.
\end{align*}
\end{enumerate}
By this definition, $\mathcal{A}_n^{\rm max}$ and $\mathcal{X}_n^{\rm max}$ 
are quasi-affine schemes over ${\rm Spec}\ \ZZ$.  

As above, we note that $x_i \bmod \deg n$ uses only the coefficients from the $f_i(x) \bmod x^{n-1}$.  Hence the coordinates
of the ambient space of $\mathcal{A}_n^{\rm max}$ are the 
$$a_{i,0}, a_{i,1}, \ldots, a_{i,n-2},$$ 
the $$A_{i,1}^{(j)}, A_{i,2}^{(j)}, \ldots, A_{i,n-1}^{(j)},$$
where $1 \le i \le d$ and $1 \le j \le g = (d-1)(d-2)/2$, as well as coefficients of the formal group law $G$.  
However, by Lemma~\ref{FGNF1}, if $p > d + 2$, we can assume that the formal group law coefficients only
enter in higher degrees.
}
\end{defn}

\noindent
{\bf Note.}  We have found it more convenient to define $\mathcal{A}_n$ and $\mathcal{X}_n$ as here.  In the notation used in \cite{Lit}, the 
subscripts are shifted by $1$.  

\section{The first steps for general $d \ge 4$ for sufficiently large $p$}
\label{FirstSteps}

In this section, we will show that if $p > d + 2$, then the ``first part'' of a particular Abel relation always agrees with the Abel relation from an algebraic
curve $C$ of degree $d$.   In fact, this is a part of the argument for the characteristic $p$ converse result. The precise statement is the following.

\begin{thm}
Let $p > d + 2$, let $k$ be an algebraically closed field of characteristic $p$, 
and consider the projection $\mathcal{A}_{d+2}^{\rm max} \to \mathcal{X}_{d+2}^{\rm max}$.  If $(x_i, t_i, G) \in \mathcal{A}_{d+2}(k)$
for a particular set of mappings $t_i$ to be described below (see \eqref{Ai1s13}, \eqref{Ai2s2} below), then
the corresponding $f_i(x)$ are the local expansions of an algebraic curve of degree $d$ defined over $k$.  In other words, 
$$\mathcal{X}_{d+2}^{\rm max} =
 \{(x_i) : f_i \bmod \deg\, (d+1) \text{ from an algebraic curve of degree $d$ }\}.$$
\end{thm}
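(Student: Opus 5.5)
The plan is to reduce the Abel relation modulo $\deg(d+2)$ to a linear system and then identify that system with the condition that the jets $f_i \bmod x^{d+1}$ come from one degree-$d$ curve. Since $p > d+2$, Lemma~\ref{FGNF1} lets us replace $G$ by a strictly isomorphic law with $G(X,Y) \equiv X+Y \bmod \deg p$. In degrees $\le d+1 < p$ the relation $\prod_G t_i(x_i(u,v)) \equiv 0 \bmod \deg(d+2)$ then becomes $g$ separate additive relations
$$\sum_{i=1}^d t_i^{(j)}(x_i(u,v)) \equiv 0 \bmod \deg(d+2), \qquad 1\le j\le g.$$
We will work with the derivatives $\tau_i^{(j)} = dt_i^{(j)}/dx$. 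Dividing by integers $\le d+1$ is harmless, so we may translate freely between the $t_i^{(j)}$ and the $1$-forms $\tau_i^{(j)}dx$, exactly as with the integrals in Theorem~\ref{AbClassical}.

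\textbf{Step 1: unwind the relation.} Expand the displayed relation using the explicit form of $x_i(u,v)$ from parts (b)--(e) of the Notation, including the Lagrange reversion description and the $\binom{m}{\ell}$ pattern \eqref{BCE}. Setting the coefficient of $u^{m-\ell}v^\ell$ to zero for $m\le d+1$ gives, in each degree, a system that is linear in the $A^{(j)}_{i,n}$ with coefficients polynomial in the $a_{i,q}$. The lowest-degree equations say $\sum_i A_{i,1}^{(j)} a_{i,0}^{r}=0$ for $r\le d-3$ and similar identities. This is the trace/residue form of Abel: $\sum_i \tau_i(0)\,h(a_{i,0})=0$ for polynomials $h$ of small degree.

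\textbf{Step 2: choose the $t_i$ and get a Vandermonde structure.} Take the specific normalized $t_i$ referred to in \eqref{Ai1s13} and \eqref{Ai2s2}: the $g$ Abel relations indexed by monomials $x^ay^b$ with $a+b\le d-3$. Their leading coefficients are
$$A_{i,1} = \frac{a_{i,0}^b}{\prod_{k\neq i}(a_{i,0}-a_{k,0})}$$
times the appropriate factor, and the next coefficients $A_{i,2}$ involve $a_{i,1}$. The $a_{i,0}$ are distinct, so the Vandermonde determinant is invertible. Solving the low-order equations then expresses the $t_i$ through the data $(a_{i,0},a_{i,1})$, as in the Griffiths argument.

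\textbf{Step 3: recover the curve.} Substitute these $t_i$ into the equations of degree $d+1$ (the coefficients of $u^{m-\ell}v^\ell$ with $m=d+1$). The claim to prove is that these equations are equivalent to the existence of $c_1,\dots,c_d$ with $\deg c_r\le r$ such that $\prod_i(y-f_i(x)) \equiv f(x,y) \bmod x^{d+1}$. Concretely, let $\sigma_r(x)$ be the elementary symmetric functions of the $f_i$. The curve condition is that the coefficient of $x^s$ in $\sigma_r$ vanishes for $s>r$ and $s\le d$. I expect the degree-$(d+1)$ Abel equations to be exactly these vanishing statements, via the Newton-type identity that Abel sums of $y^b x^a\,dx/f_y$ equal residues of $\sigma$-coefficients. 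The converse inclusion, that jets of an actual curve give a point of $\mathcal{X}^{\rm max}_{d+2}$, follows directly from Theorem~\ref{FormalAbCharp}.

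\textbf{Main obstacle.} The hard step is Step 3. It means matching the Lagrange-reversion coefficients of $x_i(u,v)$ up to degree $d+1$ against the symmetric-function conditions and checking that the resulting linear map is surjective, with determinant built from Vandermonde factors and integer coefficients $\le d+1$, hence invertible when $p>d+2$. I would first verify the case $d=4$ by explicit computation, guided by the $d=3$ case of \cite{Lit}. I would then organize the general case by weight and degree grading, using (c): each equation is isobaric, so only finitely many monomials in the $a_{i,q}$ occur and the argument can proceed by induction on weight.
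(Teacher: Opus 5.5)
Your outline follows the paper's: normalize $G$ with Lemma~\ref{FGNF1}, fix the $t_i$ as truncated integrals of $h\,dx/(\partial f/\partial y)$ with $f=\prod_i(y-f_i(x))$, and show that the Abel equations cut out exactly the jets for which $\prod_i(y-f_i)$ has total degree $d$ (the paper's Claim, i.e.\ \eqref{Reiss4a} and \eqref{Reiss4b}). \textbf{The gap is Step~3.} It carries all the content, you leave it as an expectation, and it targets the wrong equations. A condition on the coefficients $a_{i,s}$ of $x^s$ first appears in total degree $s+1$ in $u,v$, not in degree $d+1$. In the paper's $d=4$ computation:
\begin{itemize}
\item $\sum_i a_{i,2}=0$ comes from the $u^3$ coefficient;
\item the other two relations of \eqref{Reiss4a} come from the $u^3v$ and $u^4$ coefficients;
\item only \eqref{Reiss4b} comes from degree $5$.
\end{itemize}
The paper then checks by a Gr\"obner basis computation that the ideal generated by \emph{all} coefficients of $u^{m-\ell}v^\ell$ with $m\le 5$ equals the ideal of the six relations. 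Nothing in your plan explains why the degree-$(d+1)$ coefficients alone should be ``exactly'' the $d(d-1)/2$ conditions on the $\sigma_r$. You need the equations of all degrees up to $d+1$.

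The same oversight appears in Step~2. In degree $m$ there are $m+1$ equations for the $d$ unknowns $A_{i,m}^{(j)}$. So the Vandermonde argument gives solvability only for $m\le d-1$, with $\sum_{m=1}^{d-2}(d-1-m)=g$ free parameters. For $m=d$ and $m=d+1$ the systems are overdetermined. If you solve for the $A_{i,m}^{(j)}$ rather than fixing them as integrals, these compatibility conditions (one per $j$ in degree $d$, two in degree $d+1$) are the only source of constraints on the jets, and your plan drops the degree-$d$ ones. Also, only $A_{i,1},A_{i,2}$ depend on $(a_{i,0},a_{i,1})$ alone; $A_{i,m}$ involves $a_{i,m-1}$.

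Two smaller slips:
\begin{itemize}
\item For $x^ay^b$ with $a\ge 1$ one has $A_{i,1}=0$, and the leading coefficient is $A_{i,a+1}=a_{i,0}^b/((a+1)\delta_i)$, as in \eqref{Ai2s2}.
\item The degree-one equations give $\sum_i A_{i,1}a_{i,0}^r=0$ only for $r=0,1$. For $d\ge 5$, the sum $\sum_i a_{i,0}^{2d-6}/\delta_i$ (take $A_{i,1}=a_{i,0}^{d-3}/\delta_i$ and $r=d-3$) is not zero in general.
\end{itemize}

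Your residue idea is worth pursuing, since it could give the general-$d$ argument the paper only asserts \emph{mutatis mutandis}. Along $x=uy+v$ one has $\partial_v\sum_i t_i(x_i)=\sum_i H(y_i)/F'(y_i)$, where $F(y)=f(uy+v,y)$ and $H(y)=h(uy+v,y)$; the same holds for $\partial_u$ with $yH$ in place of $H$. To use it, you would have to show, degree by degree in $u,v$, that the residue at infinity vanishes exactly when the coefficients of $x^{m-1}$ in the $\sigma_r$ that violate total degree $d$ do.
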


\begin{proof}
For simplicity of presentation, the details will be given for the case $d = 4$, so we assume $p \ge 7$.  The result is 
true in general, though, and may be proved by the same sort of reasoning, \emph{mutatis mutandis}.

As in Lemma~\ref{FGNF1}, we may assume that $G$
has the form $G(X,Y) \equiv X + Y \bmod \deg p$.   With this normalization, the 
form of the equations defining $\mathcal{A}_6 = \mathcal{A}_{4 + 2}$ is as follows.
From terms of degree $1$ in $u,v$:
\begin{align*}
u : &\sum_{i=1}^4 a_{i,0} A_{i,1}^{(j)} =0\\
v:  &\sum_{i=1}^4 A_{i,1}^{(j)} = 0
\end{align*}
From terms of degree $2$ in $u,v$:
\begin{align*}
u^2: &\sum_{i=1}^4 a_{i,0}^2 A_{i,2}^{(j)} + a_{i,0} a_{i,1} A_{i,1}^{(j)} = 0\\
uv: &\sum_{i=1}^4 2 a_{i,0} A_{i,2}^{(j)} + a_{i,1} A_{i,1}^{(j)} = 0\\
v^2: &\sum_{i=1}^4 A_{i,2}^{(j)} = 0
\end{align*}

As in item (d) in the Notation in the previous section, the equation from $u^{m - \ell} v^\ell$
is obtained from the equation for $u^m$ by applying the formal differential operator
$$\frac{1}{\ell!}\left(\frac{\partial^\ell}{\partial a_{1,0}^\ell} +  \frac{\partial^\ell}{\partial a_{2,0}^\ell} + \frac{\partial^\ell}{\partial a_{3,0}^\ell} + \frac{\partial^\ell}{\partial a_{4,0}^\ell}\right).$$
Thus the remaining equations can be obtained from the following:  from degree $3$, 
$$u^3:  \sum_{i=1}^4 a_{i,0}^3 A_{i,3} + 2 a_{i,0}^2 a_{i,1}  A_{i,2} + (a_{i,0}^2 a_{i,2} + a_{i,0} a_{i,1}^2) A_{i,1}  = 0.$$
Then from degree $4$:
\begin{align*}
u^4:  &\sum_{i=1}^4 a_{i,0}^4 A_{i,4}^{(j)} + \sum_{i=1}^4 3 a_{i,0}^3 a_{i,1} A_{i,3}^{(j)} + \sum_{i=1}^4 (2 a_{i,0}^3 a_{i, 2} + 3 a_{1, 0}^2 a_{i,1}^2) A_{i,2}^{(j)}\\
        & + (a_{i,0}^3 a_{i,3} + 3 a_{i,0}^2 a_{i,1}a_{i,2} + a_{i,0} a_{i,1}^3) A_{i,1}^{(j)} = 0.
 \end{align*}
Finally, from degrees $\ge 5$:
\begin{align*}
u^5: 0 &= \sum_{i=1}^4 a_{i,0}^5 A_{i,5}^{(j)} + \sum_{i=1}^4 4 a_{i,0}^4 a_{i,1} A_{i,4}^{(j)} + \sum_{i=1}^4 (3 a_{i,0}^4 a_{i,2} + 6 a_{i,0}^2 a_{i,1}^2) A_{i,3}^{(j)} \\
           &\qquad + \cdots  + \sum_{i=1}^4 (a_{i,0}^4 a_{i,4} + 4 a_{i,0}^3 a_{i,1} a_{i,3} + \cdots + a_{i,0} a_{i,1}^4) A_{i,1}^{(j)},
\end{align*}
and generally
\begin{align}
\label{HigherDegrees}
u^n: 0&= \sum_{i=1}^4 a_{i,0}^n A_{i,n}^{(j)} + \sum_{i=1}^4 (n-1) a_{i,0}^{n-1} a_{i,1} A_{i,n-1}^{(j)}\nonumber \\ 
& \quad + \sum_{i=1}^4 \left((n-2) a_{i,0}^{n-1} a_{i,2} + \binom{n-1}{2}  a_{i,0}^{n-2} a_{i,1}^2\right) A_{i,n-2}^{(j)} + \cdots  \\
& \quad + (a_{i,0}^{n-1} a_{i,n-1} + (n-1) a_{i,0}^{n-2} a_{i,1} a_{i,n-2 }+ \cdots  + a_{i,0} a_{i,1}^{n-1}) A_{i,1}^{(j)}.\nonumber
\end{align}

Much of the information we need is actually contained in these terms containing $A_{i,n}^{(j)}$, $A_{i,n-1}^{(j)}$ and $A_{i,1}^{(j)}$ from the terms of total degree $n$ in $u,v$.  

We begin with some observations.  We want to consider Abel relations in a $g$-dimensional formal group law with $g = (4 - 1)(4 - 2)/2 = 3$, 
so we take $1 \le j \le 3$ here.  However the form of the equations from the coefficients of $u,v, u^2, uv, v^2$ shows that there are 
$2$ degrees of freedom in the choice of the $A_{i,1}^{(j)}$ and $1$ further degree of freedom in the choice of the $A_{i,2}^{(j)}$.  Then from the terms of degree $m = 3$ in $u,v$ on
to degree $m = p - 1$ in the truncated Abel relation, the $A_{i,m}^{(j)}$  are uniquely determined by the earlier choices and the coefficients of the $C_i$.  This follows because for
each $j$ the 
coefficient matrix of the system for the $A_{i,m}^{(j)}$ have the following form.  For $m = 3$, for instance, the matrix is
$$\begin{pmatrix}
a_{1,0}^3 & a_{2,0}^3 & a_{3,0}^3 & a_{4,0}^3\\
3 a_{1,0}^2 & 3 a_{2,0}^2 & 3 a_{3,0}^2 & 3 a_{4,0}^2\\
3 a_{1,0} & 3 a_{2,0} & 3 a_{3,0} & 3 a_{4,0}\\
1 & 1 & 1 & 1
\end{pmatrix}$$
whose determinant is $9$ times the usual \emph{Vandermonde determinant} of the $a_{i,0}$.  For $m > 3$, the coefficient matrices of the 
$A_{i,\ell}^{(j)}$ terms have a similar Vandermonde form, but with more rows and numerical coefficients form the binomial coefficients $\binom{m}{\ell}$.  
We assume the $a_{i,0}$ are distinct, so the determinant of the $4 \times 4$ submatrix in the last four rows of these matrices is nonzero.  
That implies the $A_{i,m}^{(j)}$ for $m \ge 3$ are uniquely determined by the $A_{i,1}^{(j)}, A_{i,2}^{(j)}$ and the coefficients of the $C_i$.  
The upshot is that  there is a $3$-dimensional space of solutions for the $A_{i,1}$ and $A_{i,2}$.  
%This is what we meant earlier about why it might be more natural to assume the formal group law is 3-dimensional for the case $d = 4$.  

However, and this will be a key observation, the coefficients $A_{i,n}^{(j)}$ for $n \ge 3$ are always determined by exactly the same
equations in terms of the $A_{i,1}^{(j)}$, the $A_{i,2}^{(j)}$, and the coefficients $a_{i,n}$ of the equations of the $C_i$.  
In the following, it will be most convenient to make a particular choice of coordinates in $G$, so to speak, by picking one particular set of 
solutions of the equations for the $A_{i,1}^{(j)}$ and $A_{i,2}^{(j)}$.  
We will use the following notation:  For each $1 \le i \le 4$, let 
\begin{equation}
\label{deltai}
\delta_i = \prod_{j\ne i} (a_{i,0} - a_{j,0}).
\end{equation}
Then it is easy to see that the following choices give two linearly independent solutions for the equations from $u,v$ above.
\begin{equation}
\begin{matrix}
\label{Ai1s13}
A_{1,1}^{(1)} = 1/\delta_1 & A_{2,1}^{(1)}  = 1/\delta_2 &  A_{3,1}^{(1)}  = 1/\delta_3 & A_{4,1}^{(1)}  = 1/\delta_4.\\
A_{1,1}^{(3)}  = a_{1,0}/\delta_1 & A_{2,1}^{(3)} = a_{2,0}/\delta_2 &  A_{3,1}^{(3)} = a_{3,0}/\delta_3 & A_{4,1}^{(3)} = a_{4,0}/\delta_4\\
\end{matrix}
\end{equation}

Substituting these into the equations from $u^2, uv, v^2$ we can solve for the $A_{i,2}^{(1)}$, $A_{i,2}^{(3)}$.
On the other hand, with the choice $A_{i,1}^{(2)} = 0$, $1 \le i \le 4$, we have 
\begin{equation}
\begin{matrix}
\label{Ai2s2}
A_{1,2}^{(2)} = 1/(2\delta_1) & A_{2,2}^{(2)} = 1/(2\delta_2) & A_{3,2}^{(2)} = 1/(2\delta_3) & A_{4,2}^{(2)} = 1/(2\delta_4).
\end{matrix}
\end{equation}
These agree with the first terms in the truncated local expansions of the formal integrals of the differentials
\begin{equation}
\label{AbInts}
  \omega_1 = \frac{dx}{\partial f/ \partial y},\  \omega_2 = \frac{x\, dx}{\partial f/\partial y},\ \omega_3 = \frac{y\, dx}{\partial f/\partial y},\
\end{equation}
where we continue the expansion until computing the integral requires inversion of an exponent containing the prime $p$.
In other words, it is not difficult to check that in fact we can take  sets of coefficients for $j = 1,2,3$ to
 agree with the terms $\bmod\ x^p$ in the series expansions of the integrals of the Abelian differentials 
on an algebraic quartic curve in characteristic zero.  
%We will now indicate two distinct (but related) ways to establish the result of this Theorem.  As mentioned above, we only show the 
%details for $d = 4$, but the ideas extend to all $d \ge 4$ in a relatively straightforward way.  
%For the first approach, we begin by noting that the $a_{i,0}$ and $a_{i,1}$ from the $C_i$ are completely arbitrary 
%(although we require as always that the $a_{i,0}$ are distinct.  Since the $y = a_{i,0} + a_{i,1} x$ are always the 
%local expansions of algebraic quartics (e.g. the reducible quartic made up of the lines with those $4$
%equations), it follows that any $x_i \bmod\ x^2$ and the  $t_i \bmod\ x^3$ can appear in truncated Abel relations
%$\bmod \deg 3$ and furthermore the possible $A_{i,1}^{(j)}$ and $A_{i,2}^{(j)}$ always form $3$-dimensional vector
%space determined by the $a_{i,0}$ and $a_{i,1}$.  
%Furthermore, it is also true that if we consider $C_i \bmod\ x^3$ from an algebraic quartic curve, for which the 
%first formal Reiss relation
%$$a_{1,2} + a_{2,2} + a_{3,2} + a_{4,2} = 0$$
%is satisfied, there is a truncated Abel relation using $A_{i,1}^{(j)}$ and $A_{i,2}^{(j)}$ as above where the $A_{i,3}^{(j)}$
%come from the expansions $\mod \deg 4$ of the integrals in \eqref{AbIints}.  
More precisely,  letting $f(x,y) = \prod_{i=1}^4 (y - f_i(x))$, we can take
%\begin{align*}
$$t_i^{(1)}(x) \equiv  \int \left. \frac{dx}{\partial f/\partial y}\right|_{C_i}  \bmod x^p, \quad
t_i^{(2)}(x) \equiv \int \left. \frac{x\, dx}{\partial f/\partial y}\right|_{C_i} \bmod x^p, $$
and
$$t_i^{(3)}(x) \equiv \int \left. \frac{y\, dx}{\partial f/\partial y}\right|_{C_i} = \int \left. \frac{f_i(x) dx}{\partial f/\partial y}\right|_{C_i} \bmod x^p.$$
%\end{align*}
These make sense since we assume $p \ge 7$ so we can still
invert the necessary exponents to get truncated expansions for the integrals.  

The $(x_i) \in \mathcal{X}_6$ from algebraic quartic curves come from a smooth, $14$-dimensional 
subvariety $\mathcal{C}_6 \subset \mathcal{X}_6$.  (Note that there is also a $\binom{4 + 2}{2} = 15$-dimensional vector space
of homogeneous polynomials of degree $4$ in $x,y$, hence a $14$-dimensional projective space of all plane quartics.)

The following fact is obvious, but it is important for our approach.
\vskip 10pt
\noindent
{\it Claim.}
The $(x_i)$ in $\mathcal{C}_6$ are the local expansions 
for which
\begin{equation}
\label{prodexp}
\prod_{i=1}^4 (y - f_i(x) \bmod x^5)
\end{equation}
has total degree $4$ in $x,y$.

\vskip 10pt
This leads to a set of equations on the coefficients in the $f_i(x)$ of the following form.  First from the coefficients of $y^3 x^2$, $y^3 x^3$, and $y^2 x^3$ in
the expansion of \eqref{prodexp},
\begin{align}
\label{Reiss4a}
0 &= \sum_{i=1}^4 a_{i,2} \nonumber \\
0 &= \sum_{i=1}^4 a_{i,3}  \\
0 &=\sum_{i=1}^4 \left(\left(\sum_{j \ne i} a_{j,0}\right) a_{i,3} + \left(\sum_{j\ne i} a_{j,1}\right) a_{i,2}\right) = 0. \nonumber
\end{align}
 In addition, from the coefficients of $y^3 x^4$, $y^2 x^4$, and $y x^4$ in \eqref{prodexp}, the following additional relations must hold on the coefficients of $x^4$:
\begin{align}
\label{Reiss4b}
0 &= \sum_{i=1}^4 a_{i,4} \nonumber \\
0 &= \sum_{i=1}^4 \left( \left(\sum_{j \ne i} a_{j,0}\right) a_{i,4} + \left(\sum_{j\ne i} a_{j,1}\right) a_{i,3} \right) + \sum_{1 \le i < j \le 4} a_{i,2} a_{j,2}  \\
0 &=\sum_{i=1}^4 \left(\left(\sum_{\substack{1 \le j < k \le 4\\  j,k \ne i}} a_{j,0} a_{k,0}\right) a_{i,4} + \sum_{i=1}^4 \left(\sum_{\substack{1 \le j,  k \le 4,\\  j,k \ne i}} a_{j,0} a_{k,1}\right) a_{i,3} \right) \nonumber\\ 
&\qquad + \sum_{i=1}^4 \left(\sum_{\substack{1 \le j < k \le 4,\\ j,k \ne i}} a_{j,1} a_{k,1} \right) a_{i,2} + \sum_{\substack{1 \le i < j \le 4,\\ \{k,\ell\} \cup \{i,j\} = \{1,2,3,4\} }} (a_{k,0} + a_{\ell,0}) a_{i,2} a_{j,2} \nonumber
\end{align}
Under the assumption that the $a_{i,0}$ are distinct, it is easy to check
that these equations define a smooth 14-dimensional subvariety 
$$\mathcal{C}_6 \subset (\A^{20})^* = {\rm Spec}\ \ZZ[a_{i,0}, \ldots, a_{i,4},  (a_{i,0} - a_{j,0})^{-1}, 1 \le i < j \le 4].$$  
We claim that in fact $\mathcal{X}_6$ equals this $\mathcal{C}_6$. 

A direct computation (we used the Maple computer algebra system for this) shows that substituting the corresponding values for the $A_{i,n}^{(1)}$, 
$n = 1, 2,3$ from 
$$t_i^{(3)}(x) =  \int \left. \frac{y\, dx}{\partial f/\partial y}\right|_{C_i} = \int \left. \frac{f_i(x) dx}{\partial f/\partial y}\right|_{C_i} \bmod x^6,$$
and simplifying, the equation from $u^3$ for $j = 3$ implies
$$0 = a_{1,2} + a_{2,2} + a_{3,2} + a_{4,2},$$
the first equation from \eqref{Reiss4a}.  (The same relation can be obtained in other ways as well from the equations using the $t_i^{(j)}$ for $j = 1,2$.)  

Similarly, the equation from $u^3 v$ for $j=3$ implies
$$0 = a_{1,3} + a_{2,3} + a_{3,3} + a_{4,3},$$
which is the second equation from \eqref{Reiss4a}.
The equation from $u^4$ for $j = 3$ implies
$$0 = \sum_{i=1}^4 (2 a_{i,0} + \sum_{j \ne i} a_{j,0}) a_{i,3} + \sum_{i=1}^4 (2 a_{i,1} + \sum_{j \ne i} a_{j,1}) a_{i,2}$$
Using the previous two equations, this clearly reduces to the form of the third equation in \eqref{Reiss4a}.  
The equation from $u^3 v^2$ for $j = 3$ implies
$$0 = a_{1,4} + a_{2,4} + a_{3,4} + a_{4,4},$$
which is the first equation from \eqref{Reiss4b}.   The equations from $u^5$ and $u^4 v$ for $j = 1$ imply other
relations on the $a_{i,n}$ with $1 \le n \le 4$.  It can be checked by a Gr\"obner basis computation that the ideal generated by the coefficients of $u^{m - \ell} v^\ell$
for all $m = 1,2,3,4,5$ is equal to the ideal generated by the six right-hand sides in \eqref{Reiss4a} and \eqref{Reiss4b}.  Hence every element of 
$\mathcal{X}_6$ comes from $C_i$ where the coefficients satisfy \eqref{Reiss4a} and \eqref{Reiss4b}, and hence agree with the local expansions
of an algebraic quartic $\bmod\ x^5$.  
\end{proof}

The first equation in \eqref{Reiss4a} is a form of the so-called \emph{Reiss relation} for the intersections with the line $\Delta_{0,0}$ (see \cite[p. 677]{GH}) and 
the other equations also follow from the Reiss relations for the intersections with the lines $\Delta_{u,v}$.  

\section{The terms of degree $p$ in a maximal Abel relation}

As before, 
we will consider only the case $d = 4$ explicitly.
By Lemma~\ref{FGNF1},
the terms of degree $p$ are the first ones where the form of the formal group law of $J(C)$ must be taken into account.  We have the following
precise statement.  Let 
$$C_p(x,y) = \frac{-(x+y)^p + x^p + y^p}{p} \mod p \quad  \in \FF_p[x,y].$$
(That is, the power is expanded first in characteristic zero, all factors of $p$ from the binomial coefficients are cancelled, and then the coefficients 
are reduced mod $p$.)  For example
$$C_7(x,y) = 6 x^6 y +  4 x^5 y^2 + 2 x^4 y^3 + 2 x^3 y^4 + 4 x^2 y^5 + 6 x y^6 \in \FF_7[x,y].$$

\begin{thm}  Every formal group law of dimension $g$ over an algebraically closed field $k$ of characteristic $p$ is strictly isomorphic
\label{FGLawForm}
to one of the form 
\begin{equation}
\label{FGL}
G(X,Y) \equiv X + Y + \Gamma\, C_p(X,Y) \bmod \deg (p+1),
\end{equation}
where $\Gamma = (\gamma_{i,j})$ is some $g\times g$ matrix with entries in $k$ and $C_p(X,Y)$ is the $g$-component column vector of polynomials $C_p(X_i,Y_i)$
for $i = 1, \ldots, g$.
\end{thm}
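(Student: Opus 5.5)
Start from Lemma~\ref{FGNF1}: after a strict change of coordinates, $G(X,Y) \equiv X + Y \bmod \deg p$. Write the degree $p$ part as $G_p(X,Y) = (G_p^{(1)},\ldots,G_p^{(g)})$, a $g$-tuple of homogeneous polynomials of degree $p$ in $X,Y$. The work is to understand which $G_p$ can occur, and how much of $G_p$ can be removed by a further strict coordinate change of the form $\phi(t) = t + h(t)$, with $h$ homogeneous of degree $p$. Such a change replaces $G_p$ by $G_p + h(X) + h(Y) - h(X+Y)$, modulo degree $p+1$. So $G_p$ is determined modulo coboundaries $\delta h = h(X)+h(Y)-h(X+Y)$.

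The first step is to extract the cocycle conditions. Comparing degree $p$ terms in the associativity identity $G(G(X,Y),Z)=G(X,G(Y,Z))$ gives
$$G_p(X,Y) + G_p(X+Y,Z) = G_p(X,Y+Z) + G_p(Y,Z),$$
and commutativity gives $G_p(X,Y)=G_p(Y,X)$. Thus each component $G_p^{(i)}$ is a symmetric $2$-cocycle for the additive formal group in $g$ variables, with values in homogeneous polynomials of degree $p$ over $k$. The problem therefore reduces to the classical computation of symmetric $2$-cocycles modulo coboundaries in a fixed degree $n$. This is Lazard's lemma, which I will reprove by hand in the multivariable setting.

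The second step, and the main obstacle, is Lazard's symmetric cocycle lemma in characteristic $p$ and degree $n = p$. In one variable the lemma says that every homogeneous symmetric $2$-cocycle of degree $n$ is $c\,C_n(X,Y)$ plus a coboundary. Here $C_n(X,Y) = \frac{1}{\nu(n)}\left((X+Y)^n - X^n - Y^n\right)$, with $\nu(n)=p$ if $n$ is a power of $p$ and $\nu(n)=1$ otherwise; the paper's $C_p$ differs from this only by sign. In $g$ variables I will decompose $G_p^{(i)}$ by multidegree in $(X_1,\ldots,X_g;Y_1,\ldots,Y_g)$. I plan to reduce to the one-variable case via the following argument. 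Write $G_p^{(i)}$ as a polynomial in $X_1,Y_1$ with coefficients in the other variables, and use the cocycle identity together with symmetrization to peel off coboundaries of mixed monomials $h = X^\alpha$ with $|\alpha| = p$.

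The expected outcome is that a mixed multidegree $\alpha$ (not of the form $p e_m$) satisfies $\gcd$ of the multinomial coefficients $=1$, hence contributes only coboundaries. The only non-trivial classes come from pure multidegrees $p e_m$, where $(X_m+Y_m)^p - X_m^p - Y_m^p \equiv 0$ and the class $C_p(X_m,Y_m)$ survives. If the direct reduction proves messy, I will fall back on the standard proof: use divided-power or polynomial cohomology of $\mathbb{G}_a^g$, where $H^2_{\rm sym}$ in degree $p$ is spanned by the $C_p(X_m,Y_m)$ because each $C_p(X_m,Y_m)$ is a cocycle that is not a coboundary (since $\delta(X_m^p)=0$ in characteristic $p$).

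With this granted, after a strict isomorphism of degree $p$ each component becomes
$$G_p^{(i)}(X,Y) = \sum_{m=1}^g \gamma_{i,m}\, C_p(X_m,Y_m),$$
that is, $G \equiv X + Y + \Gamma C_p(X,Y) \bmod \deg(p+1)$ with $\Gamma = (\gamma_{i,m})$. Finally, I will note that composing the strict isomorphisms gives a single strict isomorphism, and that the degree-$p$ modification does not disturb the congruence $\equiv X+Y$ in lower degrees. This proves Theorem~\ref{FGLawForm}.
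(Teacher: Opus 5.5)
Your reduction is the right one. After Lemma~\ref{FGNF1}, each component of $G_p$ is a symmetric $2$-cocycle for $\mathbb{G}_a^g$, and a strict change $t\mapsto t+h(t)$ alters it by a coboundary. Each $C_p(X_m,Y_m)$ is a nonzero class, since the only coboundary of multidegree $pe_m$ is $(X_m+Y_m)^p-X_m^p-Y_m^p=0$. This route differs from the paper, which gives no argument and only cites \cite[Prop.~20.1.7]{H}. In Hazewinkel's framework the normal form comes from $p$-typification: the universal $p$-typical law has logarithm $X+p^{-1}V_1X^p+\cdots$, so it equals $X+Y+V_1C_p(X,Y)$ modulo degree $p+1$, with $\Gamma$ the image of $V_1$.

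The gap is the one step that carries all the content: every symmetric cocycle of total degree $p$ over $k$ is $\sum_m\gamma_mC_p(X_m,Y_m)$ plus a coboundary. This is exactly what you grant, and the reasons you offer do not prove it. The plan to ``peel off'' mixed monomials is never carried out. That $\gcd_\beta\binom{\alpha}{\beta}=1$ for mixed $\alpha$ only shows that the generator $(X+Y)^\alpha-X^\alpha-Y^\alpha$ of the cocycles defined over $\ZZ_{(p)}$ stays a coboundary mod $p$. It says nothing about cocycles that exist only in characteristic $p$. Such cocycles do occur without symmetry (e.g.\ $XY^p$ in degree $p+1$). Excluding them in the symmetric case is precisely the content of Lazard's lemma, so that inference is circular. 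Likewise, knowing that the $C_p(X_m,Y_m)$ are nonzero classes bounds $H^2_{\rm sym}$ in degree $p$ from below; it does not show that they span.

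The gap closes quickly. Differentiating the cocycle identity in $X_m$ at $X=0$ gives $(\partial_{X_m}F)(X,Y)=\phi_m(X+Y)-\phi_m(X)$, where $\phi_m(W)=(\partial_{X_m}F)(0,W)$. Differentiating once more in $Y_{m'}$ and using $F(X,Y)=F(Y,X)$ shows that $\sum_m\phi_m\,dW_m$ is closed.

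In a mixed multidegree $\alpha$ with $|\alpha|=p$, every nonzero $\alpha_m$ lies strictly between $0$ and $p$. So closedness forces $\phi_m=c\,\alpha_mW^{\alpha-e_m}$ for a single constant $c$. Then $F'=F-c\,((X+Y)^\alpha-X^\alpha-Y^\alpha)$ has all $\partial_{X_m}F'=0$. Its $X$-exponents are therefore multiples of $p$ bounded by $\alpha$, which forces $F'=F'(0,Y)=0$.

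In multidegree $pe_m$, write $\phi_m=cW_m^{p-1}$ and compare with $\partial_XC_p(X,Y)=X^{p-1}-(X+Y)^{p-1}$. This gives $F+c\,C_p(X_m,Y_m)=aX_m^p+bY_m^p$, and $a=b=0$ because cocycles vanish when either argument is $0$. (Alternatively, combine the one-variable lemma with the K\"unneth formula for $H^2(\mathbb{G}_a^g,k)$: for odd $p$ the cross classes $X_m^{p^i}Y_{m'}^{p^j}$ are antisymmetric and have degree $p^i+p^j\ne p$.)

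The same argument in degrees below $p$ reproves Lemma~\ref{FGNF1}, so your route becomes self-contained and elementary. The trade-off is that it stops at degree $p+1$. The $p$-typical route instead gives a normal form in all degrees, with $\Gamma$ appearing as the first coordinate $V_1$ of the classifying map.
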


This is the result of \cite[Prop. 20.1.7]{H}.  

\vskip 10pt

The matrix $\Gamma$ has several more conceptual interpretations.  The first makes use of an operation
on differential forms in characteristic $p$ that reflects an important difference with the characteristic zero situation (see, for example, \cite{Serre}).  
Let $k$ be 
a perfect field of characteristic $p$.  Then the polynomials $1, t, \ldots, t^{p-1}$ give a $p$-basis for 
$k[[t]]$ over $k[[t^p]]$.  That is, every $f(t) = \sum_{n=0}^\infty a_n t^n \in k[[t]]$ can be written uniquely as
$$f(t) = \sum_{j=0}^{p-1} f_j^p(t) t^j.$$
If $\omega = f(t)\, dt$, then the \emph{Cartier operator} $\mathcal{C}$ is defined on $\omega$ by setting
$$\mathcal{C}(\omega) = f_{p-1}\, dt$$
It can be shown that $\mathcal{C}$ is well-defined and gives the same result under any ``reparametrization''
$t \mapsto a_1 t + a_2 t^2 + \cdots \in k[[t]]$ with $a_1 \ne 0$.   The Cartier operator satisfies the following properties:
\vskip 5pt
\begin{enumerate}
\item[(a)] $\mathcal{C}(\omega_1 + \omega_2) = \mathcal{C}(\omega_1) + \mathcal{C}(\omega_2)$. \vskip 2pt
\item[(b)] $\mathcal{C}(g^p \omega) = g \mathcal{C}(\omega)$ for all $g \in k[[t]]$.\vskip 2pt
\item[(c)] $\mathcal{C}(df) = 0$ if and only if $\omega$ is an exact form: $\omega = df$ for some $f$.  (Note
this is not automatic in characteristic $p$.  In intuitive terms, computing a formal integral for $\omega = f(t)\ dt$ where
$f(t) \in k[[t]]$ might require inversion of exponents
divisible by $p$.  We will come back to this point later in some examples.) \vskip 2pt
\item[(d)] $\mathcal{C}\left(\frac{df}{f}\right) = \frac{df}{f}$ for all ``logarithmic'' differentials $\frac{df}{f}$.
(We note that since $\mathcal{C}$ is not linear over the algebraic closure of $\FF_p$ by (b), this \emph{does not}
say that the logarithmic differentials are eigenvectors with eigenvalue $\lambda = 1$, as one might be
tempted to believe.)
\end{enumerate}

Similarly, for differential forms $\omega = f(t_1,\ldots,t_m)\, dt_1 \wedge \cdots \wedge dt_m$ in several variables,
there are unique $f_{c_1, \ldots, c_m}$ for $0 \le c_i \le p-1$ such that 
$$\omega = \sum_{c_1,\ldots, c_m} f_{c_1,\ldots,c_m}^p t_1^{c_1} \cdots t_m^{c_m}\, dt_1 \wedge \cdots \wedge dt_m$$
and then the Cartier operator is defined by setting 
$$\mathcal{C}(\omega) = f_{p-1,\ldots, p-1}\, dt_1 \wedge \cdots \wedge dt_m.$$
The algebraic properties are the same as for the one-variable version.

Our first promised conceptual interpretation for the matrix $\Gamma$ in \eqref{FGL} come from the following connection.

\begin{prop}
\label{CartierFGL}
The matrix $\,\Gamma$ shows the results of applying the Cartier operator to the 
invariant differentials on the formal group law $G$.
\end{prop}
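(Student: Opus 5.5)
We work with $G$ already in the normal form \eqref{FGL} of Theorem~\ref{FGLawForm}. The aim is to compute a basis of invariant differentials modulo degree $p$, apply the Cartier operator to each, and read off $\Gamma$ (or its transpose, depending on conventions) as the matrix of the result in that basis.

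\emph{Step 1: compute the invariant differentials.} For a $g$-dimensional formal group law, the invariant differentials are the rows of $\omega(X) = dX\cdot \left(\partial G/\partial Y\,(X,0)\right)^{-1}$. From \eqref{FGL}, $\partial G_i/\partial Y_j(X,0) = \delta_{ij} + \sum_k \gamma_{i,k}\,\partial C_p(X_k,Y_k)/\partial Y_j|_{Y=0} + O(\deg p)$. Since
$$C_p(x,y) = -\sum_{0<m<p} \frac{1}{p}\binom{p}{m} x^m y^{p-m},$$
only the term with $y^1$ survives at $Y=0$, so $\partial C_p(X_k,Y_k)/\partial Y_k|_{Y=0} = -\frac{1}{p}\binom{p}{1}X_k^{p-1} = -X_k^{p-1}$. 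Hence
$$\left(\partial G/\partial Y\,(X,0)\right)_{ij} \equiv \delta_{ij} - \gamma_{i,j}X_j^{p-1} \bmod \deg p.$$
Inverting modulo degree $p$, the first-order term is enough, and this gives a basis
$$\Omega_i \equiv dX_i + \sum_j \gamma_{i,j} X_j^{p-1}\,dX_j \bmod \deg p$$
of invariant differentials, up to the index convention. These are exactly the differentials of the truncated logarithm from Lemma~\ref{FGNF1}, plus the first obstruction term.

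\emph{Step 2: apply the Cartier operator.} By property (a) and the several-variable version of the operator, $\mathcal{C}(dX_i)=0$, since it is exact and equal to $X_i^{0}\,dX_i$. Also $\mathcal{C}(X_j^{p-1}dX_j)=dX_j$. Thus, modulo the terms we must still control, $\mathcal{C}(\Omega_i) = \sum_j \gamma_{i,j}^{1/p}\,dX_j$, because $\mathcal{C}$ is $p^{-1}$-linear by (b). Invariant differentials on a commutative formal group are closed, and $\mathcal{C}$ carries closed invariant differentials to invariant differentials. Writing $\mathcal{C}(\Omega_i)=\sum_j c_{ij}\Omega_j$, we can compare linear terms and conclude $c_{ij}=\gamma_{i,j}^{1/p}$. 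Equivalently, the matrix of $\mathcal{C}$ in the basis $\{\Omega_j\}$ is $\Gamma^{(1/p)}$, which determines $\Gamma$ entrywise.

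\emph{Main obstacle.} Terms of degree $\ge p$ in $\Omega_i$ could also contribute to the linear part of $\mathcal{C}(\Omega_i)$, for example monomials $X^{\alpha}dX_j$ with all exponents of $X^{\alpha}X_j$ congruent to $p-1 \bmod p$ and total degree $2p-1$ or more. Such terms have $\mathcal{C}$-image of degree $\ge 1$, so they do not affect the constant coefficients. The argument therefore only needs $\mathcal{C}(\Omega_i) \bmod \deg 1$ together with the fact that $\mathcal{C}(\Omega_i)$ is again invariant. That invariance follows from the compatibility of $\mathcal{C}$ with the pullbacks along $G$ and the projections, using the multivariable version of the reparametrization invariance. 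Verifying this invariance statement carefully is the step I expect to require the most care. A fallback is to cite \cite{H} for the link between the matrix $\Gamma$ and the $p$-curvature or Hasse--Witt matrix, and identify the latter with the Cartier action.
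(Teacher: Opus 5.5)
Your proposal is correct and follows the same route as the paper: compute the invariant differentials $\Omega_i \equiv dX_i + \sum_j \gamma_{i,j} X_j^{p-1}\,dX_j \bmod \deg p$ from the linear part of the Jacobian of $G$ in normal form, then apply $\mathcal{C}$ term by term and read off the coefficients. Your version is more careful than the paper's computation in three places, each of which you handle correctly:
\begin{itemize}
\item The paper drops the $p^{-1}$-linearity of $\mathcal{C}$. The coefficients are really $\gamma_{i,j}^{1/p}$, which agree with $\gamma_{i,j}$ only when $\Gamma$ has entries in $\FF_p$.
\item The paper's intermediate matrix $D$ has a sign slip, although its final $\Omega_i$ are right.
\item The paper tacitly assumes that $\mathcal{C}(\Omega_i)$ is again invariant and that terms of degree $\ge p$ do not affect the constant coefficients; you justify both.
\end{itemize}
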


\begin{proof}
For simplicity, we will show the details for the case of a $3$-dimensional formal group law because we will 
use that case to understand Abel relations on quartic curves where $g = (4-1)(4-2)/2 = 3$.  From 
Equation~\eqref{FGL}, modulo terms of degree $p+1$ and higher, we have   $G(X,Y) \equiv $
\begin{align}
\label{fgrouplaw}
&(X_1+Y_1 + \gamma_{1,1} C_p(X_1,Y_1) + \gamma_{1,2} C_p(X_2,Y_2) + \gamma_{1,3}C_p(X_3,Y_3),  \nonumber \\
&\ X_2+Y_2 + \gamma_{2,1} C_p(X_1,Y_1) + \gamma_{2,2} C_p(X_2,Y_2) + \gamma_{2,3}C_p(X_3,Y_3), \\
&\ X_3+Y_3 + \gamma_{3,1} C_p(X_1,Y_1) + \gamma_{3,2} C_p(X_2,Y_2) + \gamma_{3,3}C_p(X_3,Y_3) ). \nonumber
\end{align}
Letting $D = DG_X(0,Y)$, that is, the Jacobian matrix of derivatives with respect to the $X_i$ evaluated at $(0,Y)$, we find
$$D \equiv \begin{pmatrix} 1 + \gamma_{1,1} Y_1^{p-1} & \gamma_{1,2} Y_2^{p-1} & \gamma_{1,3} Y_3^{p-1} \\
                                     \gamma_{2,1} Y_1^{p-1} & 1 + \gamma_{2,2} Y_2^{p-1} & \gamma_{2,3} Y_3^{p-1}\\
                                     \gamma_{3,1} Y_1^{p-1} & \gamma_{3,2} Y_2^{p-1} & 1 +  \gamma_{3,3} Y_3^{p-1} \end{pmatrix} \bmod \deg p.$$
and by \cite[p. 243]{Freije}, the invariant differentials are obtained from the rows of the inverse matrix $D^{-1}$ as follows.  One
basis is
\begin{align}
\label{InvDiffsJ}
\Omega_1 &\equiv (1 + \gamma_{1,1}Y_1^{p-1}) dY_1 + \gamma_{1,2} Y_2^{p-1} dY_2 + \gamma_{1,3} Y_3^{p-1} dY_3  \bmod \deg p\nonumber \\
\Omega_2 &\equiv   \gamma_{2,1}Y_1^{p-1} dY_1 + (1 + \gamma_{2,2} Y_2^{p-1}) dY_2 + \gamma_{2,3} Y_3^{p-1} dY_3  \bmod \deg p\\
\Omega_3 &\equiv   \gamma_{3,1}Y_1^{p-1} dY_1 + \gamma_{3,2} Y_2^{p-1} dY_2 + (1 + \gamma_{3,3} Y_3^{p-1}) dY_3  \bmod \deg p. \nonumber
\end{align} 
Hence applying the Cartier operator and expanding the results in terms of the $\Omega_j$, we have 
\begin{align}
\label{CInvDIffsJ}
\mathcal{C}(\Omega_1) &= \gamma_{1,1} dY_1 + \gamma_{1,2} dY_2+ \gamma_{1,3} dY_3 + \cdots \nonumber\\
&= \gamma_{1,1} \Omega_1 + \gamma_{1,2}\Omega_2 + \gamma_{1,3} \Omega_3\nonumber \\
\mathcal{C}(\Omega_1) &= \gamma_{2,1} dY_1 + \gamma_{2,2} dY_2 + \gamma_{2,3} dY_3 + \cdots\\
&= \gamma_{2,1} \Omega_1 + \gamma_{2,2}\Omega_2 + \gamma_{2,3} \Omega_3\nonumber\\
\mathcal{C}(\Omega_1) &= \gamma_{3,1} dY_1+ \gamma_{3,2} dY_2+  \gamma_{3,3} dY_3 + \cdots\nonumber\\
&=  \gamma_{3,1} \Omega_1 + \gamma_{3,2}\Omega_2 + \gamma_{3,3} \Omega_3.\nonumber
\end{align}
In other words, the entries of the matrix $\Gamma$ give the expansions of the Cartier operator
on the invariant differentials on the formal group law.
 \end{proof}             
 
 Because of this result, combined with the facts used in the proof of Theorem~\ref{FormalAbCharp}, we get a close
 relation between this matrix $\Gamma$ in the formal group law of the Jacobian of a curve and the so-called
 \emph{Cartier-Manin matrix} of the curve, namely the matrix expressing the $\mathcal{C}(\omega_j)$, $j = 1, \ldots, g$ 
 in terms of the $\omega_j$ for a basis $\{\omega_1, \ldots, \omega_g\}$ of $H^0(C, \Omega^1(C))$.
 This matrix is also closely related to (but dual to and hence not exactly the same as) the so-called \emph{Hasse-Witt matrix} of $C$.
 See \cite{AchterHowe} for a discussion of the relation and a warning about consequences of some past
 confusions between the Cartier-Manin and Hasse-Witt matrices in the literature.
 
 A second more conceptual interpretation for the meaning of the matrix $\Gamma$ comes from the following
 fact.
 
 \begin{prop}
 \label{pEndo}
 Over a field of characteristic $p$, the endomorphism $[p]$ of the formal group law from Equation \eqref{FGL} is given by 
 $$[p](X) = \Gamma X^p \bmod \deg\, (p + 1),$$
where $X^p$ is the vector of $p$-th powers $X_i^p$.
\end{prop}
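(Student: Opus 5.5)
We compute $[p](X)$ directly from the definition $[n](X) = G(X,[n-1](X))$, working modulo terms of degree $p+1$ and using the normal form \eqref{FGL} from Theorem~\ref{FGLawForm}. Since $G(X,Y) \equiv X + Y \bmod \deg p$, induction gives $[n](X) \equiv nX \bmod \deg p$ for every $n$. The only corrections in degree $p$ come from the term $\Gamma\, C_p$, and no other terms can contribute below degree $p+1$. The plan is therefore to do this inductive computation one coordinate at a time, keeping track only of the homogeneous degree-$p$ part.

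The first step is the claim that for each $n \ge 1$,
$$[n](X) \equiv nX + \Gamma\, \Phi_n(X) \bmod \deg(p+1),$$
where $\Phi_n(X)$ is the column vector whose $i$-th entry is the homogeneous degree-$p$ polynomial $\phi_n(X_i)$ given by the characteristic-zero identity
$$\phi_n(t) = \frac{n t^p - (nt)^p}{p} = \frac{n - n^p}{p}\, t^p .$$
For the induction, write $[n](X) = G(X,[n-1](X))$. Substituting $Y = (n-1)X + (\text{degree}\ge p)$ into \eqref{FGL}, the degree-$\ge p$ part of $Y$ contributes only to $X+Y$, since $C_p$ has no linear terms. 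Inside $C_p$ we may replace $Y_i$ by $(n-1)X_i$ modulo $\deg(p+1)$. The key identity is $C_p(t,(n-1)t) = \frac{t^p + (n-1)^p t^p - (nt)^p}{p}$, interpreted over $\mathbb{Z}$ and reduced mod $p$. Adding this to $\phi_{n-1}(t)$ telescopes to $\phi_n(t)$, because $\frac{(n-1)t^p - ((n-1)t)^p}{p} + \frac{t^p + ((n-1)t)^p - (nt)^p}{p} = \frac{nt^p - (nt)^p}{p}$. This is a legitimate computation over $\mathbb{Z}$, since every intermediate expression has integer coefficients, so it can be reduced mod $p$ at the end.

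The second step sets $n = p$. Then $pX \equiv 0$ in characteristic $p$, and
$$\phi_p(t) = \frac{p - p^p}{p}\, t^p = (1 - p^{p-1})\, t^p \equiv t^p \pmod p.$$
Hence $[p](X) \equiv \Gamma X^p \bmod \deg(p+1)$, which is the statement of Proposition~\ref{pEndo}.

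The main point to get right is the bookkeeping in the induction: the expression $\frac{n - n^p}{p}$ must be handled as an integer before reduction mod $p$, and not as a quotient computed in $\FF_p$. Once everything is done in $\mathbb{Z}[t]$ and the defining formula $C_p(x,y) = \frac{x^p + y^p - (x+y)^p}{p}$ is used literally, the telescoping is immediate. As a cross-check, one can verify the case $g = 1$ with $G$ the multiplicative law $X + Y + XY$. Here $[p](X) = (1+X)^p - 1 = X^p$, which agrees with $\Gamma = 1$, consistent with Proposition~\ref{CartierFGL} since $\mathcal{C}(dX/(1+X)) = dX/(1+X)$.
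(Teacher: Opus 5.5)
Your proof is correct and follows the paper's route: the paper only says the result ``can be verified by a direct computation from \eqref{FGL}'' (mentioning the Dieudonn\'e module as an alternative), and your induction $[n](X) \equiv nX + \Gamma\,\Phi_n(X) \bmod \deg(p+1)$, using the integral telescoping identity for $\phi_n(t) = \frac{n-n^p}{p}\,t^p$ and then $\phi_p(t) \equiv t^p$, is exactly that computation written out. One small caveat concerns only your sanity check, not the proof: $X+Y+XY$ is not itself in the normal form \eqref{FGL}, so comparing it with $\Gamma = 1$ tacitly uses that the degree-$p$ part of $[p]$ is unchanged under strict isomorphism, which is true.
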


\begin{proof}
Here $[p](X)$ means the series
$$[p](X) = G(X,G(X,G(X, \cdots )))$$
nesting to level $p-1$.  This can be verified by a direct computation from \eqref{FGL}.  It also follows from various 
relations in the so-called Dieudonn\'e module of the formal group law.  See \cite{H}.
\end{proof}

For future reference we will conclude this section by describing a coarser classification result for formal group laws
over algebraically closed fields of characteristic $p$, often called the \emph{Dieudonn\'e-Manin classification} (see \cite{O}).  
This is coarser because it classifies formal group laws up to an equivalence relation 
known as \emph{isogeny} rather than \emph{strict isomorphism} as above.   Two formal group laws are said
to be \emph{isogenous} if their Newton polygons are equal.  A different, but equivalent, definition in terms
of formal power series is given in \cite[(28.3.4)]{H}.  

\begin{thm}  
\label{DieudonneManin}
Over an algebraically closed field $k$ of characteristic $p$, up to isogeny,
every finite-dimensional formal group law $G$ can be written as a sum of formal group laws of the form
$$G \sim \sum_{i} G_{m_i, n_i},$$
(that is, the variables $X$, $Y$ and the vector of formal power series $G(X,Y)$ have a product decomposition
of this shape) where 
\begin{itemize}
\item All the group laws $G_{m_i,n_i}$ are defined over the prime field, $\FF_p$.
\item $\gcd(m_i, n_i) = 1$ for all $i$.
\item $\dim G_{m_i,n_i} = m_i$ for all $i$.
\item All $G_{m_i, n_i}$ are indecomposable up to isogeny.
\item The height of $G(m_i, n_i)$ (that is, the largest power $p^h$ such that $[p]_{G_{m_i,n_i}}(X) = f(X^{p^h})$ for some $f$) is $m_i + n_i$.
\item The covariant Dieudonn\'e module of the formal group law $G_{m,n}$ equals
$$D(G_{m,n}) =  W(k)[F,V]/W(k)\cdot (F^m - V^n),$$
where $W(k)$ is the ring of Witt vectors over $k$, and $F$ and $V$ are operators known as
\emph{Frobenius} and \emph{Verschiebung}, respectively.
\end{itemize}
\end{thm}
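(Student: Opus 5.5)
The plan is to pass from formal group laws to Dieudonné modules and then classify the modules up to isogeny. I will work with Cartier–Dieudonné theory in the covariant normalization of \cite{H}. Let $W = W(k)$ with Frobenius $\sigma$, and let $E = W[F,V]$ be the Dieudonné ring, with relations $FV = VF = p$, $Fa = a^\sigma F$ and $Va^\sigma = aV$. The first step is the equivalence, quoted from \cite{H} or \cite{O}, between $g$-dimensional formal group laws over $k$ and $E$-modules $M$ that are $V$-adically complete, $V$-torsion free, and satisfy $\dim_k M/VM = g$. Under this equivalence, isogenies of formal groups correspond to $E$-linear maps that become isomorphisms after tensoring with $\mathbb{Q}$. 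So it suffices to classify the isocrystals $N = M\otimes_{W}K$, with $K = \mathrm{Frac}(W)$, together with $V$-stable lattices in them.

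The second step is the core: the Dieudonné–Manin slope decomposition of $N$. For each coprime pair $(m,n)$ I will introduce the simple object
$$N_{m,n} = E_K/E_K\,(F^m - V^n),$$
of $K$-dimension $m+n$. Its slope is $n/(m+n)$ for $F$, or equivalently $m/(m+n)$ for $V$. I will show every $N$ is a direct sum of such objects in three moves.
\begin{itemize}
\item[(i)] Choose a cyclic vector for the $\sigma$-linear operator $F$; this exists because $k$ is algebraically closed, hence $K$ is big enough. Then $N \cong E_K/E_K P$ for a twisted polynomial $P$.
\item[(ii)] Factor $P$ according to the Newton polygon of its coefficients, using a twisted Hensel lemma. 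This splits $N$ by slope, and splitting (rather than merely filtering) is possible because $\mathrm{Ext}$ between different slopes vanishes.
\item[(iii)] Show that an isoclinic module of slope $n/(m+n)$ satisfies $F^{m+n}N_0 = p^{n}N_0$ for some lattice $N_0$. Then use that $\sigma^{m+n}$-fixed points on a $W$-lattice span, which is Lang's theorem / Hilbert 90 over algebraically closed $k$, to find a basis on which $F^m = V^n$.
\end{itemize}
Since these basis vectors have coefficients in $W(\FF_p)$, each summand descends to $\FF_p$, which gives the first bullet of the statement. Indecomposability of $N_{m,n}$ up to isogeny follows because its endomorphism algebra is a division algebra over $\mathbb{Q}_p$ of invariant $n/(m+n)$.

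The third step reads off the invariants. Connectedness of the formal group forces $V$ to be topologically nilpotent, so every slope satisfies $m \ge 1$; the étale part, where $m = 0$, is excluded. Pick the standard lattice $M_{m,n} = E/E(F^m - V^n)$ with $W$-basis $1, F, \dots, F^{m-1}, V, \dots, V^{n}$ suitably reduced. Then $M/VM$ has $k$-dimension $m$, which gives $\dim G_{m,n} = m$. The rank of $M$ over $W$ is $m+n$, and for formal groups this rank equals the height. Concretely, $[p] = FV$ acts as multiplication by $p$, and $\deg[p] = p^{\mathrm{rank}}$, so the height is $m+n$. Finally, isogenous modules have the same slopes with multiplicities, so the decomposition, equivalently the Newton polygon, is an isogeny invariant. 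Conversely, equal Newton polygons give isomorphic isocrystals, and clearing denominators yields an isogeny. This matches the paper's definition of isogeny through Newton polygons.

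I expect the main obstacle to be step (iii) together with the splitting in (ii): the existence of a slope-adapted basis with $F^m = V^n$. This is where algebraic closedness of $k$ is essential. I would first try the classical approach of Manin, or of Katz's ``slope filtration'', by successive approximation in $W$. That argument solves $\sigma$-linear equations of the form $x^{\sigma^{h}} = ux$ modulo increasing powers of $p$. Each stage reduces to solving Artin–Schreier-type equations over $k$, which is always possible because $k$ is algebraically closed.
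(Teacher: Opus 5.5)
The paper gives no proof of its own here. It quotes the classification from \cite{O}, and for the $\FF_p$-rationality of the $G_{m_i,n_i}$ it refers to Hazewinkel's universal formal group laws \cite[(28.5.9)]{H}.

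You instead reconstruct Manin's argument: the Cartier--Dieudonn\'e equivalence, the slope splitting of $N=M\otimes_W K$, normalization of the isoclinic pieces, and then reading the invariants off the lattice $E/E(F^m-V^n)$. For formal groups of finite height this is the standard proof, and your outline is sound. In step (iii) you should say where $\gcd(m,n)=1$ enters. Put $h=m+n$. The space of vectors fixed by $p^{-n}F^{h}$ is a module over the cyclic algebra $\mathbb{Q}_{p^h}\langle F\rangle/(F^h-p^n)$. That algebra is a division algebra because $\gcd(n,h)=1$, and this is what produces cyclic pieces $e,Fe,\dots,F^{h-1}e$ with $F^me=V^ne$.

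What your route buys is that the bullets become transparent computations:
\begin{itemize}
\item $\dim_k M/VM=m$;
\item $\operatorname{rank}_W M=m+n$;
\item $\FF_p$-rationality, simply because $E/E(F^m-V^n)$ is already a module over $\ZZ_p[F,V]/(FV-p)$, so Cartier theory over $\FF_p$ yields $G_{m,n}$ there. This is the clean form of your remark about basis vectors over $W(\FF_p)$.
\end{itemize}
Hazewinkel's route instead gives explicit power series with $p$-integral Honda-type logarithms. That is closer in spirit to the paper's computations with the $7$-typical multiplicative group.

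Your first step, however, fails in the generality you claim. Replacing $M$ by $N=M\otimes_W K$, and treating $N$ as a finite-dimensional isocrystal whose dimension is the height, requires $M$ to be $W$-free of finite rank. That is exactly the condition that $G$ has finite height. For arbitrary finite-dimensional $G$ the reduction loses everything. For $\widehat{\mathbb{G}}_a$ the covariant module has $F=0$, hence $p=FV=0$ on $M$ and $N=0$.

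In fact the statement itself is false there: $\widehat{\mathbb{G}}_a$ is not isogenous to any $\prod G_{m_i,n_i}$. A homomorphism $h\colon \widehat{\mathbb{G}}_a\to G$ satisfies $[p]_G\circ h=h\circ[p]_{\widehat{\mathbb{G}}_a}=0$. Since $[p]_G$ is finite for $G$ of finite height, this forces $h=0$, and the mirror argument handles maps $G\to\widehat{\mathbb{G}}_a$.

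So you must impose ``of finite height'' (i.e.\ $p$-divisible) at the outset. This is automatic for the formal Jacobian of a smooth curve, which has height $2g$. It is not automatic for the generalized Jacobians the paper also uses: the cuspidal quartic in the examples has formal group $\widehat{\mathbb{G}}_a^3$.

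Relatedly, what you prove about height is $\deg[p]=p^{m+n}$, and that is the correct notion. The parenthetical definition in the statement ($[p](X)=f(X^{p^h})$ with $h$ maximal) agrees with it only when $m_i=1$. For $m\ge 2$, a factorization of $[p]$ through $\mathrm{Fr}^h$ forces $\deg[p]\ge p^{hm}$, hence $h<m+n$.
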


If $G$ is the formal completion of the Jacobian of a smooth curve of genus $g$, for instance, 
its Newton polygon starts at $(0,0)$ and ends at $(2g,g)$.  The polygon consists of a collection of 
line segments with integer endpoints and slopes $0 \le \lambda \le 1$ and there is an additional 
symmetry property in that case.  We will give several computations of Newton polygons
of formal Jacobians of curves in the next section, making use of the strong connection between the 
zeta-function of a curve over a finite field and the zeta-function of its Jacobian.

The fact that all of the $G_{m_i,n_i}$ are defined over $\FF_p$ can be seen from 
Hazewinkel's construction via universal formal group laws in \cite[(28.5.9)]{H}.
 
 \section{Examples of maximal formal Abel relations}
 
 In this paper, we want to illustrate how the Abel relations work for 
 various algebraic curves and the role of the terms from the formal group law.
 Hence we will consider the explicit form of the Abel relations on several particular plane
 curves of degree $4$.  We first consider smooth plane quartics (hence genus $g = 3$) and we 
 will assume the curves are defined over a finite field $\FF_q$.   To begin, here is some background information 
 about the possible formal group structures of the Jacobians of smooth curves of genus $g = 3$.  
 
 As discussed in \cite{AchterHowe}, the invariants we will compute are 
closely connected with the zeta function of $C$ over $\FF_q$.  
For small $q$, the zeta function can be computed directly and naively by counting points on the projective closure
of the curve over $\FF_q$, $\FF_{q^2}$, and $\FF_{q^3}$.)   In particular, we will take $q = 7$ in the following.
Say the number of $\FF_{7^i}$-rational points is $N_i$ for $i = 1,2,3$.
 The zeta function has the form 
$$Z(X,t) = \frac{343 t^6 + 49 a_1 t^5 + 7 a_2 t^4 + a_3 t^3 + a_2 t^2 + a_1 t + 1}{(1 - t)(1 - 7 t)}.$$
where 
\begin{align*}
a_1 &= N_1 - 7 - 1,\\
a_2 &= \frac{a_1^2 + N_2 - 49 - 1}{2},\\
a_3 &= \frac{a_1 a_2 + a_1 (N_2 - 49 - 1) + N_3 - 343 - 1}{3}.
\end{align*}
As is known from results of Weil (results that predated and informed the more general Weil conjectures), 
these follow from the factorization of the numerator
$$\sum_{k = 0}^6 c_k t^k = 343 t^6 + 49 a_1 t^5 + 7 a_2 t^4 + a_3 t^3 + a_2 t^2 + a_1 t + 1 = \prod_{j = 1}^6(1 - r_j t)$$
in $\CC[t]$, where the reciprocal roots $r_j$ are algebraic integers of absolute value $\sqrt{7}$.  
The formulas for the $a_i$ then follow from the Newton formulas expressing power sums of 
the $r_j$ in terms of the elementary symmetric polynomials in the $r_j$.  

There are several ways to describe the possible formal structures of the Jacobians of smooth genus-3
curves, up to isomorphism (over the algebraic closure of the finite field).  As above, we specialize to the case $q = 7$ to 
relate this discussion to the data from the examples to follow.  Perhaps the most satisfactory way
to package the following is via the structure of the \emph{Dieudonn\'e module} of the formal
group law.  This leads to a list of $8$ possible \emph{Ekedahl-Oort types} for the formal Jacobian of a smooth genus 3 curve.
For a convenient summary of this language and the resulting classification, consult \cite{AP}.
 However, we will only provide the Ekedahl-Oort types of our examples without detailed justification,
  because that approach is more distant from the actual
constructions involved in understanding the formal Abel relation than other ways of describing the structure.  

Instead, we will start by noting that other ways for describing the formal group structure
 can be phrased in terms of the $7$-adic \emph{Newton polygon} of the numerator polynomial
$\sum_{k=0}^6 c_k t^k$
of the zeta function.  This is the lower convex hull of the points $(k,{\rm ord}_7(c_k))$, $k = 0,1,\ldots,6$, 
a collection of line segments starting at $(0,0)$ and ending at $(6,3)$.  All of the segments have slopes $m$ satisfying 
$0 \le m \le 1$.  Because of the functional equation
of the zeta function, this collection has a symmetry property which comes down to saying that the 
number of segments of slope $m$ is the same as the number of segments of slope $1 - m$.  
The number of pairs of segments of slope $0$ and $1$ computes the so-called \emph{$p$-rank} of the Jacobian, denoted
by $f$.   This is also the dimension of the vector subspace of $H^0(C, \Omega^1(C))$ spanned by \emph{logarithmic forms} (defined over the algebraic
closure $\overline{\FF_7}$) on the curve or the formal Jacobian.  The segments of slope $\frac{1}{2}$ compute the so-called \emph{$a$-number}, which coincides with 
the dimension of the subspace of $H^0(C,\Omega^1(C))$ spanned by exact forms.  By the general facts about the Cartier operator
cited earlier, the $a$-number is also equal to the dimension of the kernel of the Cartier-Manin matrix.  It is known in general that $0 < a + f \le g$.
 
 The following examples of smooth curves were found
 by a random search in the space of plane quartics, making use of the normalization
 $$f(0,y) = y^4 + 3y^3 + y + 3 = (y+1)(y+2)(y+3)(y+4)$$
 so the curve is in a suitable position relative to the $y$-axis. 
  In other words, we did not use any of the existence results
 for such curves that have been established and appear in the literature.
 We will deal with two ``boundary cases" first---these are, in a sense, the most general kind of example
 and the least general kind of example.  
 
 First we will discuss an ``ordinary''  curve where the 
 Cartier-Manin matrix is \emph{invertible}.  This is the generic case in a sense;  it is equivalent to saying the 
 kernel of the endomorphism $[p]$ on $J(C)$ has order $p^g = 7^3$.   It also implies that the formal
 group of the Jacobian is isomorphic over the algebraic closure of $\FF_7$ to a product of three copies of
 the formal multiplicative group.   One such curve is 
 \begin{align*}
 0 = f(x,y) &= y^4 + (2 x + 3) y^3 + (2x^2 + 5x) y^2 + (5 x^3 + 4x^2 + 5x + 1)y\\
                &\qquad + 6x^4 + 6x^3 + 2x^2 + x + 3.
 \end{align*}
 
 Using the monomial basis 
 \begin{equation}
 \label{AbDiffBasis}
 \omega_1 = \frac{dx}{\partial f/\partial y}, \qquad 
 \omega_2 = \frac{x\, dx}{\partial f/\partial y}, \qquad 
  \omega_3 = \frac{y\, dx}{\partial f/\partial y}
  \end{equation}
 for $H^0(C,\Omega^1(C))$, we compute the Cartier-Manin matrix $\mathcal{C}$ using the formula from 
 \cite{StohrVoloch}.  The result is
 $$\mathcal{C} = \begin{pmatrix}   4 & 1 & 2\\
           0 & 5 & 6\\
           0 & 2 & 3\end{pmatrix},$$
which has nonzero determinant in $\FF_7$.   Hence the Jacobian of this $C$ is ordinary.  
\begin{figure}[h]
\begin{center}
\includegraphics[height=2in,width=4in]{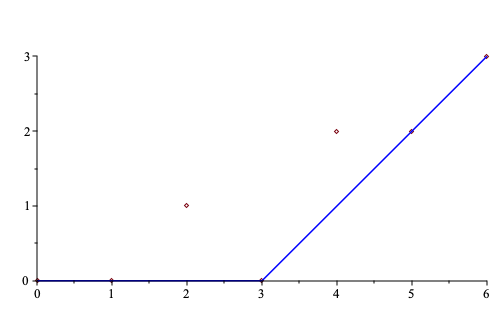}
\caption{The Newton polygon of this ordinary curve.}
\end{center}
\end{figure}
By counting points as described above we find the zeta function
$$Z(X,t) = \frac{343 t^6 + 98 t^5 + 49 t^4 + 23 t^3 + 7 t^2 + 2 t + 1}{(1 - t)(1 - 7 t)},$$
so the Newton polygon over $\FF_7$ comes from the points
$$(0,0), (1,0), (2,1), (3,0), (4,2),  (5,2), (6,3).$$
The points $(2,1)$ and $(5,2)$ lie respectively above the line from $(0,0)$ to $(3,0)$ and the line from $(3,0)$ to $(6,3)$.
Hence the slopes of the Newton polygon are only $0$ and $1$.  (I think) the fact that the line from $(1,0)$ to $(3,0)$
contains no point from the coefficients of the zeta function just reflects the fact that we will need to go to extension
fields to find the corresponding logarithmic forms.  The corresponding Ekedahl-Oort type (see \cite{AP}) would 
be the vector $(1,2,3)$, with $f = 3$ and $a = 0$.  

Using the ideas from the proof of Theorem~\ref{FormalAbCharp}---in particular the isomorphisms
$$(f^P)^* : H^0(J(C), \Omega^1(J(C))) \longrightarrow H^0(C,\Omega^1(C)),$$
and the ``monomial'' basis 
$$\frac{dx}{\partial f/\partial y}, \quad \frac{x dx}{\partial f/\partial y}, \quad \frac{y dx}{\partial f/\partial y}$$
for $H^0(C, \Omega^1(C))$, 
we take the local expansions 
$$t_i(x) = \left(\sum_{n=1}^\infty A_{i,n}^{(1)} x^n , \sum_{n=1}^\infty A_{i,n}^{(2)} x^n, \sum_{n=1}^\infty A_{i,n}^{(3)} x^n\right)$$
and pull back the invariant differentials from $J(C)$ given as in \eqref{InvDiffsJ}.  Modulo terms of degree $7$, the results look like
\begin{align*}
t_i^*(\Omega_j) &= A_{i,1}^{(j)} + 2 A_{i,2}^{(j)} x + \cdots + 6 A_{i,6}^{(j)} x^5\\
&\qquad +  \left(\gamma_{j,1} (A_{i,1}^{(1)})^7 + \gamma_{j,2} (A_{i,1}^{(2)})^7 + \gamma_{j,3} (A_{i, 1}^{(3)})^7\right)x^6 \bmod \deg 7.
\end{align*}
A direct check implies these expansions do exactly agree with the local expansions of the differentials $\omega_i$ (as they must, by the proof of 
Theorem~\ref{FormalAbCharp}).   The same thing is true in the following examples but we will not say this again
for brevity.

However, in this case, none of these is a logarithmic form and the formal structure of the Jacobian becomes clearer after
a change of basis to a basis of $H^0(C, \Omega^1(C))$ consisting of logarithmic forms.  
Recall that $\omega = \frac{df}{f}$ is logarithmic if and only if $C(\omega) = \omega$, where 
$C$ is the Cartier operator.  We find that 
$$\omega = \frac{(a + bx + cy) dx }{\partial f/\partial y}$$
satisfies $C(\omega) = \omega$ if and only if 
\begin{align*}
a^7 &= 4 a + 5 b,\\
b^7 &= 2 b + 3c,\\
c^7 &= 2 a + b + c.
\end{align*}
By a lexicographic Gr\"obner basis computation, we see that this system of equations has $343$ solutions
$(a,b,c)$ defined over extension fields of $\FF_7$.  A direct computation shows that one possible choice of three linearly independent
logarithmic forms is
\begin{align*}
\omega_1 &= \frac{4 + 6x + 5y}{\partial f/\partial y}\ dx,\\
\omega_2 &= \frac{(5 \varepsilon + 3) + (4 \varepsilon + 6) x + \varepsilon y}{\partial f/\partial y}\ dx,\\
\omega_3 &= \frac{5 \eta + \eta x + \eta y}{\partial f/\partial y}\ dx,
\end{align*}
where $\varepsilon^7 + 6\varepsilon + 2 = 0$ and $\eta^6 + 2 = 0$.
With respect to this basis for $H^0(C,\Omega^1(C))$ over the algebraic closure of $\FF_7$, 
the Cartier-Manin matrix becomes a $3 \times 3$ identity matrix and (up to isomorphism) the formal group of 
the Jacobian splits into a product of three factors, each isomorphic to a 1-dimensional formal group 
law of height $1$ (that is, isomorphic to the formal multiplicative group).  To write each factor in a fashion consistent
with Theorem~\ref{FGLawForm} above, though, we need to use a ``$7$-typical'' form of the formal multiplicative 
group.  This is obtained by reducing a characteristic-zero group law in Honda form modulo $7$.  The Honda
group law in characteristic zero is described by the logarithm
$$L(X) = X + \frac{X^7}{7} + \frac{X^{49}}{49} + \cdots. $$
The invariant differential is given by the derivative of this:
$$\Omega = 1 + X^6 + X^{48} + \cdots .$$
The group law
$$L^{-1}(L(X) + L(Y)) = X + Y + C_7(X,Y) + \cdots $$
has nonzero terms in degrees $1 + 6\ell$, $\ell \ge 0$ and all coefficients are in $\ZZ$ so the mod $7$ reduction
is the formal group law we want.  It is easy to see that the terms of degree $7$ in the formal group law are necessary
to obtain formal Abel relations modulo degree 8.

Now we consider a second curve that is, in a sense, at the opposite end of the spectrum of possibilities from the
ordinary curve we considered above.   By a direct calculation using \cite{StohrVoloch} again, the Cartier-Manin matrix for the curve 
\begin{align*}
0 = f(x,y) = &y^4 + (6x+3) y^3 + (5 x^2 + 4 x) y^2 + (2 x^3 + 5 x+1) y\\
                  &+ 6x^4 + 6x^3 + 4x^2 + x + 3 
 \end{align*}
with respect to the ``monomial'' basis \eqref{AbDiffBasis} is \emph{identically zero}.   All three of the 
differentials $\omega_1,\omega_2,\omega_3$ are exact in this case.  
\begin{figure}[h]
\begin{center}
\includegraphics[height=2in,width=4in]{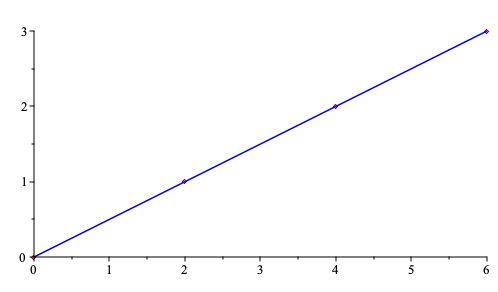}
\caption{The Newton polygon for this superspecial curve.}
\end{center}
\end{figure}
For this curve, the point counts are $N_1 = 8$, $N_2 = 92$, and $N_3 = 344$.  
Using the formulas above, the zeta function is 
$$\frac{343t^6 + 147 t^4 + 21 t^2 + 1}{(1 - t)(1 - 7t)}.$$
The Newton polygon comes from the points
$$(0,0), (2,1), (4,2), (6,3)$$
since the zero coefficients of the odd powers of $t$ can be visualized as corresponding to 
points $(1,\infty), (3,\infty), (5,\infty)$ which lie above the lines connecting the points
above from the terms with even powers of $t$.   Note that there are three lines of slope 
$m = \frac{1}{2}$, consistent with the three exact forms found above.  The corresponding Ekedahl-Oort
type is $(0,0,0)$ with $f = 0$ and $a = 3$.  

This means, of course, that 
there are functions $g_1$, $g_2$, $g_3$ such that $\omega_i = dg_i$ for $i = 1,2,3$.  
Hence there are no nonzero $x^{7\ell - 1}$ terms in the local expansions of the $\omega_i$ for any 
$\ell \ge 1$ and formal integrals of those expansions can be computed in characteristic $7$.
But in order for those formal integrals of the local expansions of the $\omega_i$ to  
give the local expansions of the $t_i^{(j)}$ for $i = 1,2,3,4$ and $j = 1,2,3$, we must
be careful to include ``constant of integration'' terms $c_{7\ell} x^{7\ell}$ for all $\ell$ in order to 
find the actual $t_i^{(j)}$.  When these are included, it is not difficult to show that the 
formal Abel relation can be continued to terms of degree $48$ with no other changes (in particular
no additional terms in the formal group law apart from the $X + Y$ in degree 1).  
However, at degree $49$,  we find that additional terms are necessary in the formal group 
law.  \emph{Each component of the formal group law} looks in fact like 
$$X + Y + \alpha C_{49}(X,Y) \bmod \langle X,Y\rangle^{50},$$
where $\alpha \ne 0$ and 
$$C_{49}(X,Y) = \frac{X^{49} + Y^{49} - (X + Y)^{49}}{7}  \mod 7.$$
Then other nonzero terms will appear in higher degrees as in the previous example in a 
Honda form $7$-typical group law.  This means each component is isomorphic to a one-dimensional formal group law of 
\emph{height 2}.   Hence, according to the usual terminology (e.g. from \cite{AP})  this is an 
example of a \emph{superspecial} (non-hyperelliptic) curve of genus $g = 3$.  The Jacobian
of the curve (i.e. the actual abelian variety, not the formal group law)
is isomorphic to a product of three supersingular elliptic curves and the formal group 
law and the Abel relation reflects this structure.  

For the purposes of comparison, we will also provide examples of smooth plane quartics over $\FF_7$ whose
Cartier-Manin matrices have the other possible ranks: $1$ and $2$.   In each case we will 
use the basis for $H^0(C,\Omega^1(C))$ given in \eqref{AbDiffBasis} to compute the Cartier-Manin matrix to start. These curves have
formal Jacobians with different structures and Abelian differentials with different properties.  
First, using \cite{StohrVoloch},  the curve
$$0 = f(x,y) = y^4 + (2x+3)y^3 + (6x^2 + 5x)y^2 + (6x^3 + 2x+1)y + 5x^4 + 6x^3 + 3$$
has Cartier-Manin matrix
$$\mathcal{C} = \begin{pmatrix} 5&  2 & 6\\ 3& 1& 1\\ 0& 2& 5\end{pmatrix}$$
of rank $2$ over $\FF_7$.   The exact form 
$$\eta = 4 \omega_1 + \omega_2 + \omega_3$$
comes from a basis of the kernel over $\FF_7$.   It is not difficult to 
find two logarithmic forms that, together with this exact form, form a basis for $H^0(C,\Omega^1(C))$.  
The point counts in this case are $N_1 = 11$, $N_2 = 67$ and $N_3 = 380$.
The zeta function works out to 
$$\frac{343 t^6 + 147 t^5 + 91 t^4 + 42 t^3 + 13 t^2 + 3 t + 1}{(1 - t)(1 - 7t)}$$
so the Newton polygon arises from the points
$$(0,0), (1,0), (2,0), (3,1), (4,1), (5,2), (6,3).$$
\begin{figure}[h]
\begin{center}
\includegraphics[height=2in,width=4in]{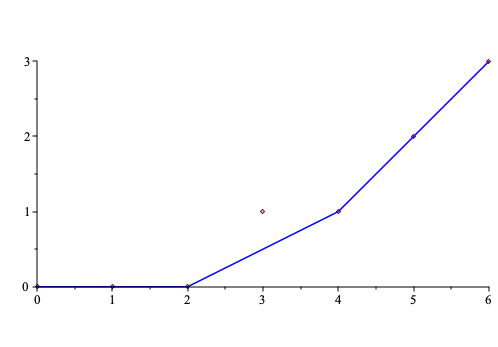}
\caption{The Newton polygon of this curve with $f = 2$.}
\end{center}
\end{figure}
There are two pairs of lines of slopes $0,1$ and one of slope $\frac{1}{2}$, consistent
with the comments above about logarithmic and exact forms on this curve.  
The Ekedahl-Oort type of the Jacobian is $(1,2,2)$ in this case.

Finally, using \cite{StohrVoloch} again, we find that the curve 
\begin{align*}
0 = f(x,y) = &y^4 + (2x + 3)y^3 + 2x^2y^2 + (x^3 + 4x^2 + 3x+1) y\\
 &+ 4 x^4 + 5 x^3 + x^2 + 2 x + 3
\end{align*}
has Cartier-Manin matrix with respect to the basis \eqref{AbDiffBasis} 
$$\mathcal{C}=\begin{pmatrix} 5& 6& 6\\  3 &  5&  5\\ 6 & 3 & 3 \end{pmatrix},$$
which has rank $=1$ over $\FF_7$.  The form of this matrix implies that both of the differentials
$$\omega_1 - 2\omega_2 \text{ and } \omega_1 - 2\omega_3$$
are exact forms since
$$\mathcal{C}(\omega_1 - 2\omega_2) = 0 = \mathcal{C}(\omega_1 - 2\omega_3).$$
It is also possible to determine a logarithmic form which completes a basis of 
$H^0(C,\Omega^1(C))$.
The point counts are $N_1 = 10$, $N_2 = 60$, $N_3 = 352$ and the zeta function 
works out to 
$$\frac{343 t^6 + 98 t^5 + 49 t^4 + 14 t^3 + 7 t^2 + 2 t + 1}{(1 - t)(1 - 7t)}.$$
The Newton polygon comes from the points
$$(0,0), (1,0), (2,1), (3,1), (4,2), (5,2), (6,3).$$
Since the points $(2,1)$ and $(4,2)$ lie above the lines between the points on 
either side, there are two line segments of slope $m = \frac{1}{2}$ and one
pair of slopes $0,1$.  Hence $f = 1$ and $a = 2$.  
\begin{figure}[h]
\begin{center}
\includegraphics[height=2in,width=4in]{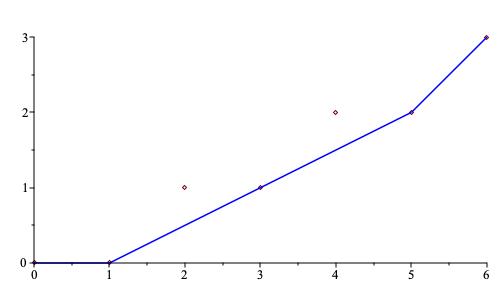}
\caption{The Newton polygon of this curve with $f = 1$.}
\end{center}
\end{figure}This is consistent with the statements above about
exact and logarithmic forms on this curve.  The corresponding Ekedahl-Oort
type is $(1,1,1)$.

%What these computations show is that the Cartier-Manin matrix determines the $x^6$
%coefficients in the expansions of the Abelian differentials, which
%are the pull-backs of the invariant differentials on the formal group of the Jacobian.  
%For example, the first curve has an ordinary Jacobian, so none of the Abelian differentials is exact.  
%(N.B.  if some differential were exact, that would imply the coefficients of  $x^6$ in its local expansions would all be zero.)  

Finally, although we have been focusing on the case of smooth quartic curves, for completeness
we discuss an Abel relation on a singular quartic curve $C$ since these are also included in Theorem~\ref{AbClassical}
when we work over $\CC$.  The curve defined by 
\begin{align*}
0 = f(x,y) &= y^4 + (x + 6) y^3 + (3x^2 + 6x + 4) y^2 + (4 x^3 + 5 x^2 + 5 x + 5) y \\
&+ 2 x^4 + x^3 + 5 x^2 + x
\end{align*}
over the algebraic closure of $\FF_7$ is an irreducible quartic with a cuspidal singularity isomorphic
to the point at the origin on $y^4 - x^3 = 0$ located at the point $P: (x,y) = (3,6)$.  The curve is in 
good position relative to the $y$-axis, but two of the intersections are only defined over the quadratic
extension of $\FF_7$ because $f(0,y)$ factors in $\FF_7[y]$ as
$$f(0,y) = y^4 + 6y^3 + 4 y^2 + 5y = y(y+4)(y^2 + 2y + 3).$$
%It is easy to check, via the formula from \cite{StohrVoloch}, that the matrix of the Cartier operator on the monomial basis
As is the case for smooth quartics, the 
$$\omega_1 = \frac{dx}{\partial f/\partial y}, \quad \omega_2 = \frac{x dx}{\partial f/\partial y}, \quad \omega_3 = \frac{y dx}{\partial f/\partial y}$$
are a basis for the global sections of the sheaf of dualizing differentials on $C$.
Moreover, we can check that the Cartier operator applied to each of the $\omega_i$ is zero.  This shows that 
that all of the basis dualizing differentials are exact.

It is also the case that  the formal group law of the (generalized) Jacobian
is isomorphic to a product of three copies of the one-dimensional formal additive group law.   Here is an argument for 
this claim based on the results from Exercise II.6.9 of \cite{Har}.  The curve $C$ has normalization $\tilde{C} \simeq \PP^1$ and
the cusp at $P$ corresponds to a unique point on $\PP^1$.
The following exact sequence relates the Picard groups of $C$ and $\tilde{C}$:
$$0 \longrightarrow \widetilde{\mathcal{O}_P}^*/\mathcal{O}_P^* \longrightarrow {\rm Pic}(C) \longrightarrow {\rm Pic}(\tilde{C}) \longrightarrow 0.$$
Passing to the formal completions we have $\widetilde{\mathcal{O}_P} \simeq k[[t]]$ and $\mathcal{O}_P \simeq k[[t^3,t^4]]$.  Hence the quotient
is isomorphic to the $3$-dimensional linear group with group operation given by 
\begin{align*}
(1 + & a t + b t^2 + c t^5) \cdot (1 + a' t + b' t^2 + c' t^5) \\
&\equiv (1 + (a + a') t + (b + b' + a a') t^2 + (c + c') t^5) \bmod k[[t^3,t^4]].
\end{align*}
Using the invariant differentials, is not difficult to show (since ${\rm char}(k) \ne 2$) that the corresponding formal group law:
$$G(X,Y) = (X_1 + Y_1, X_2 + Y_2 + X_1 Y_1, X_3 + Y_3)$$
has a logarithm defined by 
$$\mathcal{L}(X) = (X_1, X_2 - \frac{X_1^2}{2}, X_3)$$
satisfying
$$\mathcal{L}^{-1}(\mathcal{L}(X) + \mathcal{L}(Y)) = G(X,Y).$$
This shows that the formal group of the generalized Jacobian is isomorphic to $\mathbb{G}_a^3$.  

Hence we find
the mappings $t_i^{(j)}(x)$ by formally integrating the local expansions of the $\omega_j$, including suitable ``constants of integration''
containing powers $x^{7\ell}$ for $\ell \ge 1$.  The coefficients of those terms can be chosen to make the Abelian sums identically zero, and this
yields a formal Abel relation.  

To summarize, from an intuitive point of view, we have seen in these examples how formal Abel relations on algebraic
curves can be found in a number of different cases.  The formal group structure of the (generalized) Jacobian plays
a key role in finite characteristic and there are many more possibilities than in characteristic zero.  For singular
curves, there are many additional possible forms involving components isomorphic to formal additive groups that
we have not included.  However, the series expansions of the components of the mappings  
$t_i$ still do come from a sort of ``integral'' defined in finite characteristic.   It's the Cartier operator that makes up
for the fact that the ordinary derivative of  $x^p$ is $0 \bmod p$, 
so that no derivative of a power series over $\FF_p$ can contain nonzero $x^{p-1}$ terms. 

\section{Observations about the non-uniqueness of the mappings $t_i : C_i \to G$ in formal Abel relations}

We have seen how the choice of a basis in $H^0(C,\Omega^1(C))$ produces different mappings $t_i : C_i \to G$, 
where $G$ is the formal completion of the (generalized) Jacobian of $C$ at the origin.  In this section we
want to point out that there is an additional level of non-uniqueness here that appears only in positive characteristic.
This comes ultimately from a certain sort of $p$-adic periodicity of the equations defining the maximal Abel relations.

To begin, suppose the curve $C$ is defined over a finite field $\FF_q$.
In particular, say the $C_i$ are defined by $y = \sum_{n=0}^\infty a_{i,n} x^n$  with  $a_{i,n}$ in the finite field $\FF_q$ for all $n$.
Then the equations from \S~\ref{FirstSteps} above whose solutions are the $A_{i,k}^{(j)}$ in maximal formal Abel relations
are linear equations with coefficients in $\FF_q$.  Hence as we have done in the examples, we may assume that 
all the $A_{i,k}^{(j)} \in \FF_q$ as well.  In other words, in this case we will have
$$(a_{i,n})^q = a_{i,n} \quad \text{ and } \quad (A_{i,k}^{(j)})^q = A_{i,k}^{(j)}$$
for all $i, n, k, j$.   Moreover, let us assume that the $g$-dimensional formal group law $G$ is also defined over $\FF_q$.  This
implies, of course, that raising each component of the $g$-tuple of power series to the $q$th power, we obtain an equality
\begin{equation}
\label{GoverFq}
(G(X,Y))^q = G(X^q,Y^q).
\end{equation}

For simplicity, say we have $d = 4$ curves and a maximal Abel relation in a $(4 - 1)(4 - 2)/2 = 3$-dimensional formal group
law $G$.  Explicitly, this can be written in the form
\begin{equation}
\label{FAR}
0 = \prod_G t_i(x_i) = G(G(G(t_1(x_1),t_2(x_2)), t_3(x_3)), t_4(x_4)),
\end{equation}
where the $x_i = x_i(u,v)$ are the power series determined before from the curves $C_i$. 
Raising both sides of the equation in \eqref{FAR} to the $q$th power and using \eqref{GoverFq}, we obtain
\begin{equation}
\label{FARqthpower}
0 = G(G(G((t_1(x_1))^q,(t_2(x_2)))^q, (t_3(x_3)))^q, (t_4(x_4))^q).
\end{equation}
But writing the components of $t_i(x)$ (not $t_i(x_i)$ yet!) as 
$$t_i^{(j)}(x) = \sum_{k=1}^\infty A_{i,k}^{(j)} x^k,$$
when the $A_{i,k}^{(j)} \in \FF_q$ we have 
\begin{align}
\label{tiqthpower}
(t_i^{(j)}(x))^q &= \left(\sum_{k=1}^\infty A_{i,k}^{(j)} x^k\right)^q \nonumber \\
&=\sum_{k=1}^\infty A_{i,k}^{(j)} (x^q)^k \\
&= t_i^{(j)}(x^q).\nonumber
\end{align}

In addition, note that since we are assuming $a_{i,n} \in \FF_q$
for all $i,n$, it follows that 
\begin{equation}
\label{xqthpower}
(x_i(u,v))^q = x_i(u^q,v^q).
\end{equation}
Hence combining \eqref{tiqthpower} and \eqref{xqthpower}, 
$$(t_i(x_i))^q = t_i ((x_i(u,v))^q) = t_i(x_i(u^q,v^q)).$$

Now, on the other hand, consider the quantities 
$$B_{i,m}^{(j)} = \begin{cases} A_{i,m/q}^{(j)} \text{ if } q | m, \text{ that is, } m = n\cdot q \text{ for some integer } n \\
                                                       0 \text{ otherwise,}
                           \end{cases}
$$
and define 
\begin{equation}
\label{sdef}
s_i(x) = \left(\sum_{m=1}^\infty B_{i,m}^{(j)} x^m : j = 1,2,3\right) =  \left(\sum_{n=1}^\infty A_{i,n}^{(j)} x^{n\cdot q} : j = 1,2,3\right).
\end{equation}                      
 (Note:  I believe this is related to the so-called Frobenius operator ${\bf f}_q$ in the Dieudonn\'e module of the formal group law.)
 We have 
 \begin{align}
 \label{sixi}
 s_i(x_i(u,v)) &= \left(\sum_{m=1}^\infty B_{i.m}^{(j)} (x_i(u,v))^m : j = 1,2,3\right) \nonumber \\
 &= \left(\sum_{n=1}^\infty A_{i.n}^{(j)} ((x_i(u,v))^q)^n : j = 1,2,3\right)\nonumber \\
 &=  \left(\sum_{n=1}^\infty A_{i.n}^{(j)} (x_i(u^q,v^q))^n : j = 1,2,3\right).
 \end{align}
 From this we conclude the following.
 
 \begin{prop}
 With the $s_i(x)$ defined in \eqref{sdef}, it is true that
 \begin{equation}
 \label{FARs}
 \prod_G s_i(x_i(u,v)) = 0
 \end{equation}
 and hence we have another formal Abel relation on the same curves $C_i$.
 \end{prop}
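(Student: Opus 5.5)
The plan is to obtain \eqref{FARs} from the $q$th-power form \eqref{FARqthpower} of the original relation, by noticing that $s_i(x_i(u,v))$ is precisely $(t_i(x_i(u,v)))^q$. By definition \eqref{sdef}, $s_i(x) = t_i(x^q)$ componentwise. Since all $A_{i,n}^{(j)}\in\FF_q$, \eqref{tiqthpower} gives $t_i(x^q) = (t_i(x))^q$. Substituting $x = x_i(u,v)$, and noting that this is a legitimate substitution because $x_i(u,v)$ has zero constant term, we get
\[
s_i(x_i(u,v)) = \bigl(t_i(x_i(u,v))\bigr)^q = t_i\bigl(x_i(u^q,v^q)\bigr),
\]
using \eqref{xqthpower} for the last step. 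This agrees with the computation \eqref{sixi}.

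Next, apply the $q$th power to \eqref{FAR}. The $q$th-power map is a ring endomorphism of $k[[u,v]]$ in characteristic $p$, so it commutes with substitution of power series that have coefficients in $\FF_q$. Using \eqref{GoverFq} three times (once at each level of nesting), we obtain
\[
0 = G\bigl(G\bigl(G(t_1(x_1)^q, t_2(x_2)^q), t_3(x_3)^q\bigr), t_4(x_4)^q\bigr).
\]
Replacing each $t_i(x_i)^q$ by $s_i(x_i(u,v))$ turns this into $\prod_G s_i(x_i(u,v)) = 0$, which is the claim. An equivalent route is to take the original identity $\prod_G t_i(x_i(u,v))=0$ in $k[[u,v]]$ and substitute $u\mapsto u^q$, $v\mapsto v^q$. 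This is a continuous ring endomorphism, so the identity is preserved, and the first paragraph identifies the substituted terms with $s_i(x_i(u,v))$. This route needs no hypothesis on $G$ beyond its being a formal group law, so I would present it as the main argument and mention the $G^q$ version as a remark.

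The only real care needed is in justifying the identity $(h(Z))^q = h^{(q)}(Z^q)$ for a power series $h$. Here $h^{(q)}$ means $h$ with its coefficients raised to the $q$th power, and it equals $h$ exactly when the coefficients lie in $\FF_q$. This identity holds because Frobenius is additive and multiplicative in characteristic $p$ and is continuous for the adic topology, so it passes through infinite sums. I expect this to be the main (mild) obstacle. Once it is in place, the rest is formal bookkeeping.

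Finally, I would add a remark that the new relation is degenerate with respect to Theorem~\ref{FormalAbCharp}(a). Since $ds_i = 0$, the pullbacks $s_i^*(\Omega_j)$ are no longer linearly independent Abelian differentials. This shows that (b) alone does not determine the $t_i$, which is the non-uniqueness the section is meant to exhibit.
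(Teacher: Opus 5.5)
Your proof is correct and is essentially the paper's. The paper's proof is exactly your main route: via \eqref{sixi}, $\prod_G s_i(x_i(u,v))$ is the original relation $\prod_G t_i(x_i(u,v))$ with $u\mapsto u^q$, $v\mapsto v^q$, and your $q$th-power variant is the paper's preliminary computation \eqref{FARqthpower}. You also observe that the substitution route needs no hypothesis on $G$; in fact only $a_{i,n}\in\FF_q$ is used, since $s_i(x)=t_i(x^q)$ holds by definition, and this is a correct and slightly sharper point.
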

 
 \begin{proof}
From \eqref{sixi}, the left side in \eqref{FARs} is equal to the original formal Abel relation 
$$\prod_G t_i(x_i(u,v))$$
with $u$ replaced by $u^q$ and $v$ replaced by $v^q$
everywhere.   Hence the result must also equal $0$ identically in $u,v$.
But this says that  \eqref{FARs} \emph{is another formal Abel relation} on the same curves $C_i$ but with the mappings $s_i$
instead of the mappings $t_i$.         
 \end{proof}        
 
If we want to require that $t_i'(0)$ must be nonzero in formal Abel relations, then that condition can be recovered by 
replacing $s_i$ by $r_i = G(t_i, s_i)$.   A direct consequence is another level of non-uniqueness.  
 
 \begin{prop}  
 \label{Comp}
 If the $t_i$ and the $s_i$ as in \eqref{sdef} appear in
 formal Abel relations on the $C_i$, then the mappings
 $$r_i = G(t_i,s_i)$$
 also give a formal Abel relation on the $C_i$.
 \end{prop}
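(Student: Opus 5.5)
The plan is to use only the associativity and commutativity of $G$ (properties (2) and (3) in the definition of a commutative formal group law), together with the fact that $t_i(0)=0$ and $s_i(0)=0$, so that all the compositions below are well-defined power series in $u,v$. First I would check that $r_i = G(t_i,s_i)$ is a legitimate map $C_i \to G$, namely a $g$-tuple of power series in $x$ with zero constant term. Since $G(X,Y)\equiv X+Y \bmod \deg 2$, we have $r_i(x) \equiv t_i(x)+s_i(x) \bmod \deg 2$. Because $s_i$ has no terms of degree less than $q$, this gives $r_i'(0)=t_i'(0)$, which is the point of the remark preceding the proposition. Substituting $x = x_i(u,v)$ is allowed because $x_i(0,0)=0$.

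The main step is the identity
$$\prod_G r_i(x_i(u,v)) \;=\; G\Bigl(\prod_G t_i(x_i(u,v)),\ \prod_G s_i(x_i(u,v))\Bigr).$$
I would prove this by induction on the number of factors, using the rearrangement
$$G(G(a,b),G(c,d)) = G(G(a,c),G(b,d)),$$
which follows from associativity and commutativity of $G$ as identities of formal power series in $a,b,c,d$. Here $a$ and $c$ stand for the $t$-terms and $b$ and $d$ for the $s$-terms. Because these are identities in $k[[a,b,c,d]]^g$, they survive substitution of power series in $u,v$ without constant term, such as $a = t_1(x_1(u,v))$. Applying this repeatedly through the nested expression in \eqref{FAR}, with $d=4$ factors, gives the displayed identity.

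Finally, the hypotheses say that $\prod_G t_i(x_i(u,v))=0$, which is the given Abel relation, and $\prod_G s_i(x_i(u,v))=0$, which is the preceding Proposition. Since $G(X,Y)\equiv X+Y \bmod \deg 2$ has no constant term and no pure $X$-only or $Y$-only terms beyond $X$ and $Y$ themselves, we have $G(0,0)=0$. Hence $\prod_G r_i(x_i(u,v)) = G(0,0)=0$, which is the desired formal Abel relation.

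I expect the only real care needed is in the rearrangement step. One must make sure the formal identities of $G$ may be composed with substituted series in $u,v$, which requires the substituted series to lie in the maximal ideal $(u,v)$. One must also ensure the nesting convention of $\prod_G$ in \eqref{FAR} is matched. Beyond that the argument is routine.
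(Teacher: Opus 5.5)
Your proposal is correct and is the same argument as the paper's: you use associativity and commutativity of $G$ to regroup $\prod_G G(t_i(x_i),s_i(x_i))$ as $G\bigl(\prod_G t_i(x_i),\prod_G s_i(x_i)\bigr)=G(0,0)=0$. Your identity $G(G(a,b),G(c,d))=G(G(a,c),G(b,d))$ applied by induction, the remark that substituting series in $(u,v)$ preserves formal identities, and the check $r_i'(0)=t_i'(0)$ just make explicit what the paper leaves implicit; note that only the absence of a constant term in $G$ is needed for $G(0,0)=0$.
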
           
 
 \begin{proof}  
 This is a consequence of the associativity and commutativity of $G$.  We can rearrange
 the terms in 
 $$\prod_G r_i(x_i) = \prod_G G(t_i,s_i) (x_i) = \prod_G G(t_i(x_i), s_i(x_i))$$
 to put all of the $t_i(x_i)$ and all of the $s_i(x_i)$ terms together.  In other words
 $$  \prod_G G(t_i, s_i) (x_i) = G\left( \prod_G t_i(x_i), \prod_G s_i(x_i)\right) = G(0,0) = 0.$$
 This shows that the $r_i = G(t_i, s_i)$ also give formal Abel relations on the $C_i$.
 \end{proof}
 
 There are corresponding statements, of course, for mappings $r_i$ formed using $G$ to  combine
 $t_i$ and any collection of $s_i$ formed as above, but with $q$ replaced by $q^\ell$ for any strictly
increasing sequence of $\ell \ge 1$.
 
  In fact, very similar arguments also apply if $C$ is defined over an algebraically closed field $K$ of characteristic $p$, not only over a finite field
 $\FF_q$ with $q = p^r$ as we were assuming above.  Namely, if
 $$t_i^{(j)}(x) = \sum_{k=1}^\infty A_{i,k}^{(j)} x^k,$$
 with $A_{i,k}^{(j)} \in K$ for all $i,k$, then 
 $$(t_i^{(j)}(x_i))^p = \sum_{k=1}^\infty \left(A_{i,k}^{(j)}\right)^p (x_i^p)^k.$$
 Hence if we define
 $$s_i(x) = \left(\sum_{m=1}^\infty B_{i,m}^{(j)} x^m : j = 1,2,3\right),$$
where 
 $$B_{i,m}^{(j)} = \begin{cases} \left(A_{i,m/p}^{(j)}\right)^p \text{ if } p | m, \text{ that is, } m = p\cdot k \text{ for some integer } k \\
                                                       0 \text{ otherwise,}
                           \end{cases}
$$
then 
$$s_i(x_i) = (t_i(x_i))^p.$$
Now if we agree to take the formal group law $G$ as in Theorem~\ref{DieudonneManin}, the fact that
$G$ is defined over $\FF_p$ implies that 
$$\prod_G s_i(x_i) = \prod_G (t_i(x_i))^p = \left(\prod_G t_i(x_i)\right)^p = 0,$$
and we have another formal Abel relation on the curves $C_i$.  Similarly to the result of Proposition~\ref{Comp} above, 
we also have
$$\prod_G r_i(x_i) = 0, \quad \text{ where } r_i = G(t_i,s_i).$$

\section{Counterexamples to the most general converse statement}
\label{CounterEx}

In this section we will present an example showing that there are isolated counterexamples to the 
converse of Abel's theorem from Theorem~\ref{AbConv} over fields of characteristic $p$.  These
come specifically in situations where the
%the Additional Assumption~\ref{AA} does not hold and three or more
%of 
tangent lines of the curves $y = f_i(x)$ at the points $(0,a_{i,0})$ are concurrent.  In particular, 
we will consider the case where 
$$y = f_i(x) = a_{i,0} \bmod \langle x^5\rangle$$
so the four tangent lines are parallel, meeting at a point at infinity.   (Curves projectively 
equivalent to this one can be treated in a parallel way, but the formulas are significantly messier.)
The reducible quartics
\begin{equation}
\label{ReducibleCurve}
0 = f(x,y) = (y - a_{1,0})(y - a_{2,0})(y - a_{3,0})(y - a_{4,0})
\end{equation}
have formal Abel relations in the formal group law equal to the product of three
copies of the formal additive group:
$$G(X,Y) = X + Y$$
where $X = (X_1,X_2,X_3)^t$ and similarly $Y = (Y_1,Y_2,Y_3)^t$.
In addition, in constructing the counterexamples, we will suppose that the
$a_{i,0}$ for all $i = 1,2,3,4$ are elements of a finite field $\FF_q$ with $q = p^r$ 
and the prime $p \ge 7$, while $r \ge 1$.

Note that the new deformed formal curves
\begin{equation}
\label{DefBranches}
y = \widetilde{f_i}(x) = a_{i,0} + x^q
\end{equation}
(that is, $a_{i,q} = 1$ for all $i = 1,2,3,4$)
cannot come from an algebraic quartic curve (although it is true that they would lie on an algebraic curve of degree $4q$).
However, the construction we will present can also be ``jazzed up'' in such a way that the nonzero deformation
terms include infinitely many nonzero coefficients.

\begin{lem}  Computing the corresponding $x_i(u,v)$ for the deformed branches from \eqref{DefBranches}, 
we find
$$x_i(u,v) = \sum_{k=0}^\infty a_{i,0}^{q^k} u^{q^k+q^{k-1}+\cdots+q+1} + u^{q^{k-1} + \cdots + q+1} v^{q^k}.$$
\end{lem}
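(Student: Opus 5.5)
The plan is to turn the defining equations of $P_i(u,v)$ into a single functional equation for $x_i = x_i(u,v)$ and then solve it by iteration, using the fact that the $q$-th power map is additive in characteristic $p$. For the deformed branch \eqref{DefBranches} the point $P_i(u,v)$ is cut out by $y = a_{i,0} + x^q$ and $x = uy + v$. Substituting the first equation into the second gives
$$x_i = (a_{i,0}u + v) + u\, x_i^{\,q}.$$
Write $c = a_{i,0}u + v$ and $T(z) = c + u z^q$. By the formal implicit function theorem, as used earlier in the paper, $x_i(u,v)$ is the unique solution with zero constant term. I will also check this uniqueness directly. If $z, z'$ both vanish at $(0,0)$ and satisfy $z = T(z)$, $z' = T(z')$, then $z - z' = u(z-z')^q$. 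This is impossible unless $z = z'$, because the right side has strictly larger order in $(u,v)$ than the left.

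The main step is an induction on $N \ge 0$. Set $e_k = q^{k-1} + \cdots + q + 1$ for $k \ge 1$, and $e_0 = 0$. I will prove the identity
$$x_i = \sum_{k=0}^{N} \left( a_{i,0}^{q^k} u^{e_{k+1}} + u^{e_k} v^{q^k} \right) + u^{e_{N+1}} x_i^{\,q^{N+1}}.$$
For $N=0$ this is just the functional equation: $c = a_{i,0}u + v$, and $e_1 = 1$, $e_0 = 0$. For the inductive step, substitute $x_i = c + u x_i^q$ into the remainder term $u^{e_{N+1}} x_i^{q^{N+1}}$. Since $q^{N+1}$ is a power of $p$, the Frobenius gives
$$(c + u x_i^q)^{q^{N+1}} = a_{i,0}^{q^{N+1}} u^{q^{N+1}} + v^{q^{N+1}} + u^{q^{N+1}} x_i^{\,q^{N+2}}.$$
Multiplying by $u^{e_{N+1}}$ and using $e_{N+1} + q^{N+1} = e_{N+2}$ produces exactly the $k = N+1$ term, together with the new remainder $u^{e_{N+2}} x_i^{q^{N+2}}$. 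This closes the induction.

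To finish, note that $x_i$ has order at least $1$ in $(u,v)$, so the remainder $u^{e_{N+1}}x_i^{q^{N+1}}$ has order at least $e_{N+1} + q^{N+1}$, which tends to $\infty$ as $N \to \infty$. Letting $N \to \infty$ in the $\langle u,v\rangle$-adic topology therefore gives the stated series. With the convention $e_0 = 0$, the $k=0$ term is $a_{i,0}u + v$, and the series is as displayed in the Lemma. Since $a_{i,0} \in \FF_q$ one could also replace $a_{i,0}^{q^k}$ by $a_{i,0}$, but that simplification is not needed.

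The only point requiring care is the bookkeeping of exponents and of the empty-sum convention at $k=0$. The essential ingredient is the additivity of $z \mapsto z^{q^{N+1}}$, and I do not expect a genuine obstacle anywhere in the argument.
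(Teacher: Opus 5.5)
Your proposal is correct and takes the paper's own approach. The paper's proof only says to argue by induction, repeatedly substituting $y = a_{i,0} + x^q$ into $x = uy + v$. Your induction on the identity with remainder $u^{e_{N+1}} x_i^{q^{N+1}}$, which uses additivity of the $q^{N+1}$-th power map and finishes with the order estimate, is a careful write-up of exactly that sketch. (The uniqueness check is harmless but not needed, since you start from the actual $x_i$, which already satisfies $x_i = a_{i,0}u + v + u x_i^{q}$.)
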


\begin{proof}  This follows from the general patterns in the coefficients of $x_i(u,v)$ from \S 2 above, or by an induction 
using the equation $x = u y + v$ and repeatedly substituting $y = a_{i,0} + x^q$ to eliminate $y$.  
 To illustrate the 
meaning of the formula, we show the first few terms in the expansion for $q = p = 7$:
$$x_i(u,v) = a_{i,0} u + v + a_{i,0}^7 u^8 + u v^7 + a_{i,0}^{49} u^{57} + u^8 v^{49} + a_{i,0}^{343} u^{400} + u^{57} v^{343} + \cdots .$$
From the point of view of the formulas from \S 2, there are infinitely many terms since the powers $a_{i,q}^{q^k} = 1$ appear 
not multiplied by products of the other
coefficients $a_{i,n}$ with $n \ge 1$ in infinitely many of the coefficients of $x_i(u,v)$.
\end{proof}

\begin{thm} 
The curves from \eqref{DefBranches} have a formal Abel relation using the same formal additive group
as the reducible curve and also the same $t_i^{(j)}(x)$ as the reducible curve.  
\end{thm}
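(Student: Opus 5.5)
The plan is to write down the maximal Abel relation of the reducible quartic \eqref{ReducibleCurve} explicitly, and then check that the same $t_i$ still satisfy it after replacing $a_{i,0}u+v$ by the new $x_i(u,v)$ from the Lemma. The point is that the new $x_i$ are built from Frobenius twists of the old ones, and these twists are harmless because $a_{i,0}\in\FF_q$.

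\textbf{Step 1 (the reducible curve).} For $f=\prod_i (y-a_{i,0})$, on the branch $C_i$ we have $\partial f/\partial y = \delta_i$, with $\delta_i$ as in \eqref{deltai}, a nonzero constant. Integrating the basis \eqref{AbDiffBasis} along $C_i$ (where $y=a_{i,0}$) gives
$$t_i^{(1)}(x)=\frac{x}{\delta_i},\qquad t_i^{(2)}(x)=\frac{x^2}{2\delta_i},\qquad t_i^{(3)}(x)=\frac{a_{i,0}\,x}{\delta_i}.$$
These are legitimate in characteristic $p\ge 7$, since only division by $2$ occurs. The key tool is the Lagrange interpolation (partial fractions) identity
$$\sum_{i=1}^4 \frac{a_{i,0}^m}{\delta_i}=0 \quad\text{for } 0\le m\le 2.$$
With $x_i=\ell_i:=a_{i,0}u+v$, each of the three sums $\sum_i t_i^{(j)}(\ell_i)$ is a combination of $\sum_i a_{i,0}^m/\delta_i$ with $m\le 2$, with coefficients monomials in $u,v$. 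Hence all three sums vanish in the additive group $G(X,Y)=X+Y$. This recovers the Abel relation of the reducible curve, matching the choices \eqref{Ai1s13}, \eqref{Ai2s2}.

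\textbf{Step 2 (rewrite the deformed $x_i$).} Put $N_k=1+q+\cdots+q^k$ and $N_{-1}=0$. Since $a_{i,0}\in\FF_q$, we have $a_{i,0}^{q^k}=a_{i,0}$, and therefore
$$\ell_i^{q^k}=a_{i,0}u^{q^k}+v^{q^k}=a_{i,0}^{q^k}u^{q^k}+v^{q^k}.$$
Comparing with the Lemma, the deformed branches satisfy
$$x_i(u,v)=\sum_{k\ge 0} u^{N_{k-1}}\bigl(a_{i,0}u^{q^k}+v^{q^k}\bigr).$$
The essential feature is that each $x_i$ is a sum of terms that are \emph{linear} in $a_{i,0}$, and the coefficients $u^{N_{k-1}}$, $u^{q^k}$, $v^{q^k}$ in these terms do not depend on $i$.

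\textbf{Step 3 (verify the relation).} For $j=1,3$, the sum $\sum_i t_i^{(j)}(x_i)$ expands as $\sum_k u^{N_{k-1}}\sum_i a_{i,0}^{\epsilon}\bigl(a_{i,0}u^{q^k}+v^{q^k}\bigr)/\delta_i$ with $\epsilon\in\{0,1\}$. This only involves $\sum_i a_{i,0}^m/\delta_i$ with $m\le 2$, so it vanishes. For $j=2$, expand
$$x_i^2=\sum_{k,l\ge 0}u^{N_{k-1}+N_{l-1}}\bigl(a_{i,0}u^{q^k}+v^{q^k}\bigr)\bigl(a_{i,0}u^{q^l}+v^{q^l}\bigr).$$
This is quadratic in $a_{i,0}$ with $i$-independent coefficients, so $\sum_i x_i^2/(2\delta_i)=0$ by the same identity.

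The only point needing care, which I expect to be the main obstacle in writing it up cleanly, is justifying the rearrangement of these infinite sums. Each monomial $u^av^b$ receives contributions from only finitely many pairs $(k,l)$, because $N_k\to\infty$ and $q^k\to\infty$. Hence the computation is valid coefficientwise in $k[[u,v]]$. It also remains to confirm the Lemma's formula is exactly the rewritten form in Step 2. That is the substitution of $a_{i,0}^{q^k}=a_{i,0}$, which is where the hypothesis $a_{i,0}\in\FF_q$ is used.
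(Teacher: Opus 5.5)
Your proposal is correct and takes essentially the same route as the paper. You substitute the deformed $x_i(u,v)$ into the reducible curve's $t_i^{(j)}=x/\delta_i,\ x^2/(2\delta_i),\ a_{i,0}x/\delta_i$, use $a_{i,0}^{q^k}=a_{i,0}$, and kill every coefficient with the identity $\sum_i a_{i,0}^{\ell}/\delta_i=0$ for $\ell\le 2$. Two of your details are a cleaner version of what the paper leaves implicit: the rewriting $x_i=\sum_k u^{N_{k-1}}(a_{i,0}u^{q^k}+v^{q^k})$, which makes the coefficients polynomials of degree at most $2$ in $a_{i,0}$ with $i$-independent coefficients, and your finiteness remark justifying the rearrangement of the series.
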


\begin{proof}
By our general formulas from before, for the reducible quartic, 
\begin{align*}
t_i^{(1)}(x) &= \int \frac{dx}{\partial f/\partial y} = \frac{1}{\delta_i} x,\\
t_i^{(2)}(x) &= \int \frac{x dx}{\partial f/\partial y} = \frac{1}{2 \delta_i} x^2,\\
t_i^{(3)}(x) &= \int \frac{a_{i,0} dx}{\partial f/\partial y}  = \frac{a_{i,0}}{\delta_i} x,
\end{align*}
where (using the $f$ from \eqref{ReducibleCurve} above), 
$$\delta_i = \left.\frac{\partial f}{\partial y}\right|_{y = a_{i,0}} = \prod_{j \ne i}(a_{i,0} - a_{j,0}).$$
It follows from the assumption $a_{i,0} \in \FF_q$ that $a_{i,0}^{q^k} = a_{i,0}$ 
for all $i =1,2,3,4$ and all $k \ge 0$.   

In the sums 
\begin{align*}  
\sum_{i=1}^4 A_{i,1}^{(1)} x_i(u,v) &= \sum_{i=1}^4  \frac{{\displaystyle \sum_{k=0}^\infty a_{i,0}^{q^k} u^{q^k+q^{k-1}+\cdots+q+1} + u^{q^{k-1} + \cdots + q+1} v^{q^k}}}{\delta_i}\\
\sum_{i=1}^4 A_{i,1}^{(2)} (x_i(u,v))^2 &= \sum_{i=1}^4 \frac{{\displaystyle \sum_{k=0}^\infty (a_{i,0}^{q^k} u^{q^k+q^{k-1}+\cdots+q+1} + u^{q^{k-1} + \cdots + q+1} v^{q^k})^2}}{2 \delta_i}\\
\sum_{i=1}^4 A_{i,1}^{(3)} x_i(u,v) &= \sum_{i=1}^4 \frac{a_{i,0} {\displaystyle \sum_{k=0}^\infty a_{i,0}^{q^k} u^{q^k+q^{k-1}+\cdots+q+1} + u^{q^{k-1} + \cdots + q+1} v^{q^k}}}{\delta_i},
\end{align*}
we interchange the order of summation, replace all powers $a_{i,0}^{q^k}$ by $a_{i,0}$ and then separate into sums of terms with the 
same powers of $u$ and $v$.  Each coefficient of a $u^m v^n$ 
will then have the form a constant times
\begin{equation}
\label{sums}
\sum_{i=1}^4 \frac{a_{i,0}^\ell}{\delta_i}
\end{equation}
with $\ell = 0, 1$, or $2$.  However, if $f(y) = \prod_{i=1}^k (y - b_i)$ is any polynomial with distinct roots $b_i$, then considering
the values $\delta_i = f'(b_i)$ of the derivative of $f$, 
$$\sum_{i=1}^k \frac{b_i^\ell}{\delta_i} = \begin{cases}  0  &  \text{ if } 0 \le \ell \le k - 2\\
                                                                                     1 &  \text{ if }  \ell = k - 1, 
                                                                                     \end{cases}$$
(and the sum is equal to the complete homogeneous symmetric function of the $b_i$ of degree $\ell - 3$ for $\ell \ge k$).\footnote{It is interesting to note that the same relations (which are related to the so-called Euler-Jacobi formula and which
can be derived from the Lagrange interpolation formula) were also used in a crucial
way by Abel in his famous ``Paris memoir.''  See the footnote 50 on page 31 of \cite{LitAbel}.}
Hence all of the sums in \eqref{sums} with $\ell = 0,1,2$ are zero and we have a formal Abel relation.
\end{proof}

\bibliographystyle{plain}

\end{document}